\newtheorem{thm}{Theorem}
\newtheorem{crly}{Corollary}
\newtheorem{lma}{Lemma}
\theoremstyle{definition}
\newtheorem{dfn}{Definition}
\newtheorem{exx}{Example}
\newtheorem{fact}{Fact}
\newcommand{\blue}{\color{black}}
\newcommand{\red}{\color{red}}
\DeclareMathOperator{\m}{mult}
\DeclareMathOperator{\lcm}{lcm}
\DeclareMathOperator{\Stab}{Stab}
\DeclareMathOperator{\Orb}{Orb}
\newcommand\ellstar{{\ell^*}}
\newcommand{\D}{\mathbf{D}}
\newcommand{\mP}{\mathbf{P}}
\newcommand{\mQ}{\mathbf{Q}}
\newcommand{\A}{\mathbf{A}}
\newcommand{\T}{\mathbf{T}}
\newcommand{\I}{\mathbf{I}}
\newcommand{\JJ}{\mathbf{J}}
\newcommand{\cI}{\mathcal{I}_{\delta,G}}
\newcommand{\PP}{\boldsymbol{p}}
\newcommand{\C}{\boldsymbol{f}}
\newcommand{\R}{\boldsymbol{r}}
\newcommand{\Q}{\boldsymbol{q}}
\newcommand{\uuu}{\boldsymbol{u}}
\newcommand{\ttt}{\boldsymbol{t}}
\newcommand{\vv}{\boldsymbol{v}}
\newcommand{\xx}{\boldsymbol{x}}
\newcommand{\sols}{\boldsymbol{sols}}
\newcommand{\y}{\boldsymbol{y}}
\newcommand{\ZZZ}{\mathbb{Z}_{\ell_1}\times\cdots \times\mathbb{Z}_{\ell_r}}
\newcommand{\Z}{\mathbb{Z}}
\def\multiset#1#2{\ensuremath{\left(\kern-.3em\left(\genfrac{}{}{0pt}{}{#1}{#2}\right)\kern-.3em\right)}}
\begin{document}

%\begin{comment}%===============================

\begin{frontmatter}
\title{Decimation classes of nonnegative integer and fixed density vectors using multisets}

\author[FINDLAY]{Daniel M. Baczkowski}
\ead{baczkowski@findlay.edu}
\author[AFIT]{Dursun A.~Bulutoglu\corref{cor1}}
\ead{dursun.bulutoglu@gmail.com}
\address[FINDLAY]{Department of Mathematics, University of Findlay, Ohio 45840, USA}
\address[AFIT]{Department of Mathematics and Statistics, Air Force Institute of Technology,\\Wright-Patterson Air Force Base, Ohio 45433, USA}
%\cortext[cor1]{Corresponding author}
%\journal{Journal of Algebraic Combinatorics}
\begin{abstract}
We describe how previously known methods for determining the number of decimation classes of density $\delta$ binary vectors can be extended to nonnegative integer vectors, where the vectors are indexed by a finite abelian group $G$ of size $\ell$ and exponent $\ellstar$ such that $\delta$ is relatively prime to $\ellstar$.  We extend the previously discovered theory of multipliers for arbitrary {\em subsets} of finite abelian groups, to arbitrary {\em multisubsets} of finite abelian groups.  
%We also extend previously known results on orbits under the action of the multiplier group, and finding the number of solutions of a potentially highly symmetric sum problem (SP).
Moreover, this developed theory provides information on the number of distinct translates fixed by each member of the multiplier group as well as sufficient conditions for each member of the multiplier group to be translate fixing.
\end{abstract}
\begin{keyword}
Bracelet; Lattice of subgroups; Legendre pair; Multiplier group; Necklace; Recursion 
\MSC{90C10 05B10 05B20 05C25 20B05 20B25}
\end{keyword}
\end{frontmatter}

%\end{comment}%===============================

\section{Introduction}

Theorems and algorithms were developed in~\cite{decimation} for counting the decimation classes of binary vectors with constant density. We closely follow this work done for binary vectors and show similar methods apply when replacing binary vectors with nonnegative integer vectors with constant density.  Many of the proofs from this work hold for the setting of nonnegative integer vectors.  Similar algorithms apply and some results are simplified within this more general context of nonnegative integer vectors.  
%Then we discuss how corresponding nonnegative integer vector results can be used for developing a theory for the compressions of Legendre pairs.

A {\em multiset} is a collection of elements where an element is allowed to occur more than once. Such an occurrence is not allowed for sets.  For a finite set $S=\{ r_1,\ldots,r_n \}$, let $I=\{ r_1^{\m(s_1)},\ldots,r_n^{\m(s_n)} \}$ denote the multiset with $s_i$ having multiplicity $\m(r_i)=\m_I(r_i)$ for each $i=1,\ldots,n$.  To prevent confusion we often write $\m_I(\cdot)$ to clarify the underlying multiset $I$.  The cardinality of a multiset is the sum of its multiplicities.  We will often let $\sigma$ denote the sum of all elements in a finite multiset; that is, $\sigma=\sum_{i\in I} i = \sum_{r\in S} \m_I(r)\,r$.
Let $x_i=\m_I(r_i)$ for each $i=1,\ldots,n$.  The number of all such possible multisets $I$ with cardinality $k$ is equal to the number of solutions to
\begin{equation}\label{2.1.1}
\sum_{i=1}^n x_i =k,\quad x_i\in\Z_{\ge 0},
\end{equation}
i.e., the number of nonnegative integer vectors whose components sum to $k$. It is well-known that the number of solutions to (\ref{2.1.1}) is
$$\binom{n+k-1}{k}.$$
Throughout this work we will use the notation
$$\multiset{n}{k} = \binom{n+k-1}{k}$$
for the number of multisets of cardinality $k$ with elements from a set $S$ with $|S|=n$. 

Throughout, we consider vectors indexed by a finite abelian group $G$ of order $\ell$.  
The density of a vector $\vv \in \{\Z_{\ge 0}\}^G$ is defined to be $\delta=\sum_{g \in G}v_{g}$. 
We use $|\cdot|$ to denote the order of a group or the cardinality of a multiset.  
 For an isomorphism $\Phi: G_1\rightarrow G_2$ and a multiset $I$ with elements from $G_1$, $\Phi(I)=\{\Phi(i)^{\m(i)}\mid i \in I\}$.
 We assume that $G$ is a finite abelian group that has the operation~$+$ unless specified otherwise.  
It is well known that such a group is isomorphic to $\ZZZ$ for some $\ell_1,\ell_2,\ldots,\ell_r \in \mathbb{Z}_{\geq 1}$. 
Again, we let $\mathbb{Z}_{\ell}^\times=\{j \in \mathbb{Z}_{\ell}\, |\, \gcd(j,\ell)=1\}$.  Let
$\ellstar$ denotes the \textit{exponent} of the group~$G\cong\ZZZ$, i.e., the smallest positive integer $n$ such that $ng=0$ for all $g \in G$. 
Any subgroup $H$ or $K$ of $\mathbb{Z}_\ellstar^\times$ is always written multiplicatively with its identity element equal to $1$.
A{ \em decimation} of a vector $\vv\in \{\Z_{\ge 0}\}^G$ by
 $j\in \mathbb{Z}_{\ellstar}^\times$, denoted by $d_j(\vv)$, is defined to be $\left(d_j(\vv)\right)_{g}=v_{j*g}$ for each $g\in G$.  It is easy to see for $G\cong\ZZZ$ that $\ellstar=\lcm(\ell_1,\ldots,\ell_r)$ and $|G|=\ell=\prod_{i=1}^r\ell_i$.

The \textit{necklace} of a vector $\vv\in\{\Z_{\ge 0}\}^G$ of length $\ell$ is the orbit of $\vv$ under circulant shifts, whereas the \textit{bracelet} of $\vv$ is the orbit of $\vv$ under circulant shifts and decimation by $-1$ (\textit{reversals})~\cite{Dokovic2015,Sawada2013}. Throughout the paper, we denote 
the necklace containing $\vv$ (the necklace of $\vv$) by $U_{\vv}$.
 The \textit{decimation class} of 
a vector is the orbit of the vector under circulant shifts 
and decimations~\cite{Fletcher2001}.  

\begin{comment}
 Thus, the search for an LP indexed by $\mathbb{Z}_{\ell}$ is simplified by searching only across decimation class representatives~\cite{Fletcher2001}. 
Similar to the case of LPs, Djokovic et al.~\cite{Dokovic2015}  reduced an intricate search for periodic Golay pairs among all vectors to that among {\em charm bracelets} (decimation classes). 
Hence, there is interest in the unique generation of decimation classes with fixed density.  Fletcher et al.~\cite{Fletcher2001} exhaustively generated all vectors indexed by $\mathbb{Z}_{\ell}$ for odd 
lengths $\ell\leq 47$ with density $(\ell+1)/2$.  The numbers of corresponding decimation classes were determined as a result of this search. The list of the numbers of decimation classes with density $(\ell+1)/2$ has not been expanded upon since Fletcher et al.~\cite{Fletcher2001} due to the prohibitive computational complexity of exhaustive generation.  
We provide a method for determining the number of decimation classes of density $\delta$ vectors $\vv$ that are indexed by a finite abelian group $G$ of  order $|G|$ and exponent $\ellstar$
 such that $\gcd(\delta, \ellstar)=1$.  
\end{comment}
 
For a finite abelian group $G$ with exponent $\ellstar$, let $\cI$ be the collection of all {\blue multisets with elements from $G$ with cardinality $\delta$. Then $G$ acts on $\cI$ by $I\rightarrow I+g$ for each
$I \in \cI$ and $g \in G$, where ${I+g =\{(i+g)^{\m_I(i)} \mid i \in I\}}$. } The orbit of $I$ under this action is called the {\em necklace} of $I$.
Similarly, $G\rtimes\mathbb{Z}_{\ellstar}^{\times}$
acts on $\cI$ by $I\rightarrow hI+g$ for each
$I \in \cI$ and $(g,h) \in G\rtimes\mathbb{Z}_{\ellstar}^{\times}$, where 
 ${hI=\{(hi)^{\m_I(i)} \mid i \in I\}}$ and  
$\rtimes$ is the semidirect product~\cite[p.~167]{RotmanBook}.
The orbit of $I$ under the action of $G\rtimes\mathbb{Z}_{\ellstar}^{\times} $ is called the {\em decimation class} of $I$. 
The {\em bracelet} of $I$ is defined to be the orbit of $I$ under the action 
of $G \rtimes\{-1,1\} $.

There is a one-to-one correspondence  
between decimation classes of nonnegative integer vectors indexed by $G$ with density $\delta$ and the decimation classes of elements in $\cI$ given by 
\[
{\blue \vv\rightarrow I=\{i^{v_i} \mid i \in G\} }
\]
for each $\vv \in \{\Z_{\ge 0}\}^G$ with $\sum_{i \in G}v_i=\delta$. Hence, it suffices to count the distinct decimation classes in $\cI$
to count decimation classes of nonnegative integer vectors indexed by $G$ with 
density $\delta$. We assume this one-to-one correspondence throughout this work and do not differentiate between decimation classes of density $\delta$ nonnegative integer vectors and those of the elements in $\cI$.  

\begin{comment}
We make the same assumption for necklaces and bracelets.
Titsworth~\cite{Titsworth1964} derived a formula for the number of decimation classes in $$\bigcup_{1 \leq \delta\leq |G| \atop \gcd(\delta,|G|)=1} \mathcal{I}_{\delta,G}$$ for cyclic $G$. One of  our goals in this paper is to develop a method that calculates the number of decimation classes in $\cI$ for a finite abelian group $G$ and fixed $\delta$.
\end{comment}

The {\blue multiset $I$} with elements from $G$ is said to be \textit{non-periodic} if there exists no $g \in G$, $g\neq 0$, such that $I+g=I$. To avoid trivialities, we always assume that $I$ is nonempty, and we often assume $|I|$ is finite.  The following provides a sufficient condition for $I$ to be non-periodic. 
{\blue The proof is identical to that found in \cite{decimation}, which used that $I$ had a finite number of elements.}

\begin{lma}\label{lem:necklace}
Let $G$ be a finite abelian group with exponent $\ellstar$.  Let {\blue $I$ be a multiset} with elements from $G$  such that $|I|=\delta$ and $\gcd(\delta,\ellstar)=1$. Then $I$ is non-periodic and there are $|G|$ distinct multisets with cardinality $\delta$ in each necklace in $\cI$.
\end{lma}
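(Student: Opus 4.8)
The plan is to establish non-periodicity first and then obtain the necklace size as an immediate consequence via the orbit-stabilizer theorem. For non-periodicity I would argue by contradiction: suppose there exists $g \in G$ with $g \neq 0$ and $I + g = I$. Writing $H = \langle g \rangle$ for the cyclic subgroup generated by $g$, I would first observe that invariance under a single translation propagates to the whole subgroup, since iterating $I + g = I$ gives $I + kg = I$ for every integer $k$, so $I + h = I$ for all $h \in H$.

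Next I would translate this invariance into a statement about the multiplicity function. Because the multiplicity of $y$ in $I + h$ equals $\m_I(y - h)$, the identity $I + h = I$ forces $\m_I(y) = \m_I(y - h)$ for every $y \in G$ and every $h \in H$. Hence $\m_I$ is constant on each coset of $H$ in $G$. Partitioning $G$ into the cosets of $H$, each of cardinality $|H|$, and summing multiplicities, I would compute
\[
\delta = \sum_{x \in G} \m_I(x) = |H| \sum_{C \in G/H} \m_I(x_C),
\]
where $x_C$ denotes any representative of the coset $C$. This shows that $|H|$ divides $\delta$.

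The contradiction then comes from comparing with the exponent. Because $g \neq 0$, the order $|H|$ of $g$ satisfies $|H| > 1$; and because the order of any group element divides the exponent, $|H|$ divides $\ellstar$. Thus $|H|$ is a common divisor of $\delta$ and $\ellstar$ exceeding $1$, contradicting $\gcd(\delta,\ellstar) = 1$. Therefore no such $g$ exists and $I$ is non-periodic.

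Finally, for the count I would invoke the orbit-stabilizer theorem for the translation action of $G$ on $\cI$. The stabilizer of $I$ is exactly $\{g \in G \mid I + g = I\}$, which by non-periodicity equals $\{0\}$; hence the necklace of $I$, being its orbit, has size $|G|/|\Stab(I)| = |G|$, and since $I$ was an arbitrary element of $\cI$ this holds for every necklace. The only delicate point is the propagation step establishing that $\m_I$ is constant on the cosets of $\langle g\rangle$; everything else is routine. This is also precisely where the finiteness of $I$ used in \cite{decimation} is replaced by finiteness of $\delta$, which guarantees the displayed sum is finite and the divisibility conclusion is meaningful.
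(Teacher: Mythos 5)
Your proof is correct, but your non-periodicity argument takes a genuinely different route from the paper's. The paper sums the \emph{elements} of $I$ (with multiplicity): writing $\sigma=\sum_{i\in I}i\in G$, the identity $I+g=I$ gives $\sigma=\sigma+\delta g$, hence $\delta g=0$, and then a Bezout relation $n_1\delta=1-n_2\ellstar$ yields $g=n_1\delta g=(1-n_2\ellstar)g=0$ directly, with no contradiction needed. You instead sum the \emph{multiplicities}: invariance under $\langle g\rangle$ makes $\m_I$ constant on the cosets of $\langle g\rangle$, so $|\langle g\rangle|$ divides $\delta$; since it also divides $\ellstar$ and exceeds $1$, this contradicts coprimality. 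The two computations are parallel---each ultimately establishes that the order of any period $g$ divides $\delta$---but yours is a divisibility argument in $\mathbb{Z}$ obtained by partitioning $G$ into cosets, while the paper's is a one-line identity in $G$ resolved by Bezout. The paper's version is shorter and constructive; yours makes explicit the structural fact that the order of any nonzero period of $I$ would have to divide $\gcd(\delta,\ellstar)$, which is arguably more illuminating and isolates exactly where finiteness of $\delta$ enters. For the second claim the two proofs coincide in substance: the paper checks directly that $I+g_1=I+g_2$ forces $g_1=g_2$, which is precisely your observation that the stabilizer of $I$ under translation is trivial, packaged without invoking the orbit-stabilizer theorem by name.
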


{\blue The following several results are analogous to those found in \cite{decimation}.  They are restatements in the context of multisets.  Their proofs, now for multisets (rather than just sets), are identical to those of Lemmas~2-5 and Theorem~1 found in \cite{decimation}, since they depend on $I$ being finite or are general group results.  Lemma~\ref{lem:unique} is a consequence of $I$ being non-periodic. The proof of Lemma~\ref{lem:|D|} depends on $I$ being finite.
The proofs of Lemmas~\ref{lma:stab2} and~\ref{lem:frob} and Theorem~\ref{thm:gcd} use group properties.}

\begin{lma}\label{lem:unique}
Let $I$ be a non-periodic multiset with elements from $G$, where $G$ is a finite abelian group with exponent $\ellstar$. Let $t \in \mathbb{Z}_{\ellstar}^\times$ be a multiplier of $I$ 
with $tI=I+g$ for some $g \in G$. 
Then $g$ is unique.  
\end{lma}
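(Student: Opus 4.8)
The plan is to derive uniqueness directly from the hypothesis that $I$ is non-periodic, using only the fact that translation by elements of $G$ is a group action on $\cI$. Suppose that $t$ is a multiplier with $tI = I + g_1$ and simultaneously $tI = I + g_2$ for some $g_1, g_2 \in G$; I want to conclude $g_1 = g_2$. Since both expressions equal $tI$, I have $I + g_1 = I + g_2$.

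Next I would translate this equality by $-g_2$. The key preliminary observation is that translation is additive, i.e. $(I + g_1) + (-g_2) = I + (g_1 - g_2)$; this is immediate from the definition $I + g = \{(i+g)^{\m_I(i)} \mid i \in I\}$, since adding $g_1$ and then $-g_2$ to each underlying element $i$ (while preserving its multiplicity $\m_I(i)$) yields the element $i + (g_1 - g_2)$ with the same multiplicity. Applying the translation $+(-g_2)$ to both sides of $I + g_1 = I + g_2$ therefore gives $I + (g_1 - g_2) = I$.

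Finally I would invoke the definition of non-periodicity: $I$ is non-periodic means there is no $g \in G$ with $g \neq 0$ such that $I + g = I$. Setting $g = g_1 - g_2$, the equality $I + (g_1 - g_2) = I$ forces $g_1 - g_2 = 0$, that is $g_1 = g_2$, establishing uniqueness.

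This argument is short and presents no real obstacle; the only point requiring care is the additivity of translation used in the second step, which must be checked at the level of multisets (with multiplicities carried along) rather than assumed. Note that the non-periodicity hypothesis is exactly what is supplied for multisets of cardinality $\delta$ with $\gcd(\delta, \ellstar) = 1$ by Lemma~\ref{lem:necklace}, so the lemma applies to the multisets of interest in $\cI$.
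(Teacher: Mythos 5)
Your proof is correct and follows essentially the same route as the paper's: from $tI = I+g_1 = I+g_2$ deduce $I + (g_1 - g_2) = I$ and conclude $g_1 = g_2$ by non-periodicity. The only difference is that you explicitly verify the additivity of translation at the level of multisets, which the paper takes as immediate.
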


\begin{comment}
\begin{proof}
Let $tI=I+g_1=I+g_2$. Then $I+g_1-g_2=I$, and $I$ is non-periodic implies $g_1-g_2=0$, i.e., $g_1=g_2.$ 
\end{proof}

It is well known that the set of multipliers of a given {\em set} forms a subgroup of~$\mathbb{Z}_{\ellstar}^\times$, and the same holds here that the set of multipliers of~$I$ forms a subgroup of~$\mathbb{Z}_{\ellstar}^\times$.  We will denote this subgroup by~$H$.
Let $I$ be a multiset with elements from $G$ and $U_I=U_I(G)$ be the orbit of $I$ under the action of $G$; that is, let $U_I=U_I(G)$ denote the set of translates of~$I$ by elements in~$G$. For each $n \in \mathbb{Z}_{\geq 1}$, let $\phi(n)$ be the number of positive integers up to $n$ that are relatively prime to $n$. 
\end{comment}

\begin{lma}\label{lma:stab2}
{\blue Let $\mathcal{I} = \mathcal{I}_G = \cup_{\delta\ge 0} \mathcal{I}_{\delta,G}$ denote the set of all multisets with elements from $G$.  The group $\mathbb{Z}^\times_{\ellstar}$ acts on elements in
$\{U_I \mid I\in\mathcal{I} \}$ by multiplication,} where  $jU_I=U_{jI}$ 
for each $j\in\mathbb{Z}_{\ellstar}^\times$.  For a fixed $I \in\mathcal{I}$, let $H$ be the multiplier group of $I$ and $\Orb(U_I)$ be the orbit of $U_I$ under the action of $\mathbb{Z}_{\ellstar}^\times$. Then $H=\Stab(U_I)$ and $|\Orb(U_I)|=\phi(\ellstar)/|\Stab(U_I)|$.
\end{lma}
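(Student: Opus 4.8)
The plan is to prove the three assertions in sequence: that the assignment $jU_I=U_{jI}$ gives a well-defined action of $\mathbb{Z}_{\ellstar}^\times$ on the set of necklaces $\{U_I\mid I\in\mathcal{I}\}$, that the multiplier group $H$ of $I$ equals the stabilizer $\Stab(U_I)$, and that the stated orbit count then follows from the orbit--stabilizer theorem.

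First I would check that the action is well defined, i.e. that $U_{jI}$ depends only on the necklace $U_I$ and not on the chosen representative $I$. The crucial observation is that scaling by $j$ and translation by $g$ commute at the level of multisets: for every $g\in G$ one has $j(I+g)=jI+jg$, since by definition both sides send each element $i$ to $ji+jg$ while preserving its multiplicity $\m_I(i)$, so both equal $\{(ji+jg)^{\m_I(i)}\mid i\in I\}$. Hence if $U_I=U_{I'}$, then $I'=I+g$ for some $g\in G$, so $jI'=jI+jg$ lies in $U_{jI}$ and therefore $U_{jI'}=U_{jI}$. The action axioms are then immediate: $1\cdot U_I=U_{1\cdot I}=U_I$, and $(j_1j_2)U_I=U_{(j_1j_2)I}=U_{j_1(j_2 I)}=j_1U_{j_2 I}=j_1(j_2U_I)$, using that the scalar action of $\mathbb{Z}_{\ellstar}$ on $G$ is associative (as $\ellstar$ is the exponent of $G$, so $G$ is a $\mathbb{Z}_{\ellstar}$-module).

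Next I would identify $H$ with $\Stab(U_I)$ by unwinding definitions. An element $j\in\mathbb{Z}_{\ellstar}^\times$ fixes $U_I$ precisely when $U_{jI}=U_I$, equivalently when $jI$ belongs to the $G$-orbit $U_I$ of $I$, equivalently when $jI=I+g$ for some $g\in G$; but this last condition is exactly the defining property of $j$ being a multiplier of $I$. Therefore $\Stab(U_I)=H$.

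Finally, applying the orbit--stabilizer theorem to the action of the group $\mathbb{Z}_{\ellstar}^\times$, which has order $\phi(\ellstar)$, yields $|\Orb(U_I)|=\phi(\ellstar)/|\Stab(U_I)|$. I do not expect a serious obstacle; the only step demanding care is the representative-independence in the first paragraph, where one must confirm that scaling and translation interact as $j(I+g)=jI+jg$ as multisets, with multiplicities included, rather than merely as underlying sets.
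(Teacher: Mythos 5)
Your proof is correct and follows essentially the same route as the paper's: both hinge on the identity $j(I+g)=jI+jg$ to show that $jU_I=U_{jI}$ is a well-defined action, both identify $H$ with $\Stab(U_I)$ by unwinding the definition of a multiplier, and both finish with the orbit--stabilizer theorem. Your extra care in verifying the multiset multiplicities and the action axioms is a harmless elaboration of what the paper does implicitly.
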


\begin{comment}
\begin{proof}
Let $I+\alpha\in U_I$.  Then $j(I+\alpha) = jI+j\alpha \in U_{jI}$.  Thus, $jU_I \subseteq U_{jI}$.  Next, let $jI+\alpha \in U_{jI}$. Since 
$j\in \mathbb{Z}_{\ellstar}^\times$, observe $jI+\alpha = j(I+j^{\phi(\ellstar)-1}\alpha) \in jU_I$.  Then $U_{jI}\subseteq jU_I$.  This proves $\mathbb{Z}_{\ellstar}^\times$ acts on the elements in $\{U_I \mid I\in\mathcal{I} \}$ by $jU_I = U_{jI}$. 

Let $H$ denote the multiplier group of $I$, and let $t\in H$.  By definition of $t\in H$, $tI=I+g$ for some $g \in G$.  Then $tI+\alpha=
I+(\alpha+g) \in U_I$ for any $\alpha\in G$.  Thus, $U_{tI}\subseteq U_I$. 
Similarly, $I+\alpha = tI+(\alpha-g)\in U_{tI}$ for any $\alpha\in G$ implying $U_{I}\subseteq U_{tI}$.   Thus, $tU_{I}=U_{tI}=U_I$ for all $t \in H$ implying that $H\subseteq \Stab(U_I)$. Now, assume $t \in \Stab(U_I)$, i.e., $U_{tI}=tU_{I}=U_I$. Then, 
$tI+\beta=I$ for some $\beta \in G$ implying $tI=I-\beta$. 
Hence,  $t \in H$ and $\Stab(U_I)\subseteq H$.  Thus, $\Stab(U_I)= H$.  Now, by the orbit stabilizer theorem, $|\Orb(U_I)| = \phi(\ellstar)/|\Stab(U_I)|$.
\end{proof}

The following lemma describes the $g$ in Definition~\ref{dfn:mul_set}  whenever ${\gcd(|I|,\ellstar)=1}$.  
\end{comment}

\begin{lma}\label{lem:|D|}
Let $I$ be a {\blue finite multiset} with elements from $G$, where $G$ is a finite abelian group with exponent $\ellstar$.  Let $t \in \mathbb{Z}_{\ellstar}^{\times}$ be such that $tI=I+g$ for some  $g \in G$ and  $\sigma=\sum_{i \in I}i$.  Then $|I| g=(t-1)\sigma$.  If $\gcd(|I|,\ellstar)=1$, then $g=|I|^{\phi(\ellstar)-1}(t-1)\sigma$. 
%where the inverse $|I|^{-1}$ is in~$\mathbb{Z}_{\ellstar}^\times$.
\end{lma}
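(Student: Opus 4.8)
The plan is to sum the elements of both sides of the multiset identity $tI = I+g$ and then to invert the integer coefficient $|I|$ modulo $\ellstar$.

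First I would compute the sum of all elements, counted with multiplicity, on each side. On the left, applying the definition $tI = \{(ti)^{\m_I(i)} \mid i \in I\}$, the element sum is $\sum_{i \in I} ti = t\sum_{i \in I} i = t\sigma$, since scalar multiplication by the integer $t$ distributes over the group addition. On the right, using $I+g = \{(i+g)^{\m_I(i)} \mid i \in I\}$ together with the fact that $I$ has exactly $|I|$ elements counted with multiplicity, the element sum is $\sum_{i \in I}(i+g) = \sigma + |I|\,g$. Because $tI$ and $I+g$ are equal as multisets, these two sums agree, giving $t\sigma = \sigma + |I|\,g$, and hence $|I|\,g = (t-1)\sigma$. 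This establishes the first claim, and I note it uses only that $I$ is finite, so that the sums are well-defined.

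For the second claim, suppose $\gcd(|I|,\ellstar)=1$. Since $G$ has exponent $\ellstar$, the map $x \mapsto nx$ on $G$ depends only on $n \bmod \ellstar$, so integer scalars act on $G$ through $\mathbb{Z}_{\ellstar}$. By Euler's theorem, $|I|^{\phi(\ellstar)} \equiv 1 \Mod{\ellstar}$, whence $|I|^{\phi(\ellstar)}h = h$ for every $h \in G$; equivalently, $|I|^{\phi(\ellstar)-1}$ realizes the inverse of $|I|$ as an operator on $G$. Multiplying the identity $|I|\,g = (t-1)\sigma$ by $|I|^{\phi(\ellstar)-1}$ then yields $|I|^{\phi(\ellstar)}g = |I|^{\phi(\ellstar)-1}(t-1)\sigma$, and the left-hand side collapses to $g$, giving $g = |I|^{\phi(\ellstar)-1}(t-1)\sigma$, as required.

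There is no substantial obstacle here; the only points demanding care are bookkeeping ones. One must correctly account for multiplicities when summing over the multiset, so that the cardinality $|I|$, rather than the number of distinct elements, appears as the coefficient of $g$; and one must invoke the exponent $\ellstar$ at the right moment, via Euler's theorem, to justify reducing the integer scalar $|I|^{\phi(\ellstar)}$ to the identity operator on $G$.
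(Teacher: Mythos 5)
Your proposal is correct and follows essentially the same route as the paper's own proof: summing the elements of both sides of $tI = I+g$ (with multiplicity) to obtain $t\sigma = \sigma + |I|\,g$, and then inverting $|I|$ modulo $\ellstar$ via Euler's theorem to isolate $g$. No gaps.
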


\begin{comment}
\begin{proof}
Since $tI=I+g$, it follows that $t\sigma=\sigma+|I|g$ after separately taking sums of all elements in each multiset.  Then $(t-1)\sigma=|I|g$.  If $\gcd(|I|,\ellstar)=1$, then $|I| \in \mathbb{Z}_{\ellstar}$ and $|I|^{\phi(\ellstar)}=1$. % in~$\mathbb{Z}_{\ellstar}^\times$. 
 Therefore, $g=|I|^{\phi(\ellstar)-1}(t-1)\sigma$.
\end{proof}

The next result {\blue was proven in (cite decimation paper)} and describes the solutions of a finite abelian group equation.  
\end{comment}

\begin{lma}\label{lem:frob}
Let $G$ be a finite abelian group with $\Phi(G)=\ZZZ$, where $\Phi$ is an isomorphism. Let 
$\ellstar$ be the exponent of $G$, $a\in G$, and $m\in\mathbb{Z}$.  
Then the number of solutions to $mx=a$ for $x\in G$ is either~$0$ or $\prod_{1\le i\le r} \gcd(m,\ell_i)$. % where {$\Phi(G)=\ZZZ$} and $\Phi$ is an isomorphism.
 Moreover, a solution exists if and only if 
 $\Phi(a)_k \in  
\gcd(m,\ell_k)\mathbb{Z}_{\ell_k}$ for each $1\le k\le r$.
\end{lma}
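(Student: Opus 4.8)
The plan is to recast the equation through the isomorphism $\Phi$ and then exploit the fact that the solution set of a group equation is either empty or a coset of a kernel. Consider the endomorphism $\mu_m\colon G\to G$ defined by $\mu_m(x)=mx$. The solutions to $mx=a$ are precisely the elements of the preimage $\mu_m^{-1}(a)$. Since $\mu_m$ is a group homomorphism, $\mu_m^{-1}(a)$ is either empty, when $a\notin\operatorname{im}\mu_m$, or a coset of $\ker\mu_m$, in which case it has exactly $|\ker\mu_m|$ elements. This immediately yields the stated dichotomy: the number of solutions is either $0$ or $|\ker\mu_m|$.

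Next I would compute $|\ker\mu_m|$ coordinatewise. Because $\Phi$ is an isomorphism and $\Phi(mx)=m\Phi(x)$, the map $\mu_m$ corresponds under $\Phi$ to the coordinatewise multiplication-by-$m$ map on $\ZZZ$. Hence $x\in\ker\mu_m$ if and only if $m\,\Phi(x)_k\equiv 0\pmod{\ell_k}$ for every $k$, so $\Phi$ carries $\ker\mu_m$ onto the direct product of the kernels of multiplication-by-$m$ on each $\mathbb{Z}_{\ell_k}$. The elementary fact that in $\mathbb{Z}_{\ell_k}$ the set $\{x_k : \ell_k\mid mx_k\}$ equals $\frac{\ell_k}{\gcd(m,\ell_k)}\mathbb{Z}_{\ell_k}$, a subgroup of order $\gcd(m,\ell_k)$, then gives $|\ker\mu_m|=\prod_{1\le k\le r}\gcd(m,\ell_k)$, as claimed.

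Finally, for the existence criterion I would identify $\operatorname{im}\mu_m$. Coordinatewise, the image of multiplication-by-$m$ on $\mathbb{Z}_{\ell_k}$ is the subgroup $m\mathbb{Z}_{\ell_k}=\gcd(m,\ell_k)\mathbb{Z}_{\ell_k}$, using that $\gcd(m,\ell_k)$ is an integer combination of $m$ and $\ell_k$. Since the direct product decouples the coordinates, $a\in\operatorname{im}\mu_m$ if and only if $\Phi(a)_k\in\gcd(m,\ell_k)\mathbb{Z}_{\ell_k}$ for each $k$, which is exactly the stated condition. Combining the three steps proves the lemma.

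The only genuinely delicate point, really an elementary number-theoretic computation rather than an obstacle, is pinning down the kernel and image of multiplication-by-$m$ on a single cyclic group $\mathbb{Z}_{\ell_k}$; everything else is the coset structure of homomorphism fibers and the coordinatewise behavior guaranteed by $\Phi$ being an isomorphism.
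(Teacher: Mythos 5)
Your proof is correct and follows essentially the same route as the paper's (which is deferred to~\cite{decimation}): reduce via $\Phi$ to the coordinatewise equations $m\,\Phi(x)_k\equiv\Phi(a)_k\pmod{\ell_k}$ and invoke the standard count for linear congruences, which you phrase equivalently as the kernel/image analysis of the multiplication-by-$m$ endomorphism. The key facts you use --- that the fiber is empty or a coset of the kernel, that the kernel over $\mathbb{Z}_{\ell_k}$ has order $\gcd(m,\ell_k)$, and that the image is $\gcd(m,\ell_k)\mathbb{Z}_{\ell_k}$ --- are all correctly justified.
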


\begin{comment}
Let $I$ be a multiset with elements from $G$.  Let $H$ be the  multiplier group of $I$.  The next two theorems provide necessary and sufficient conditions for the existence of a translate $I+z$  fixed by an element of~$H$ and by every element in~$H$,  along with the number of distinct such translates.
\end{comment}

\begin{thm}\label{thm:gcd}
Let $G$ be a finite abelian group with $\Phi(G)=\ZZZ$, where $\Phi$ is an isomorphism. Let $I$ be a multiset with elements from $G$ that is non-periodic and $t$ be a multiplier of~$I$ such that $tI=I+g$.   Then, there exists $z\in G$ such that 
$t(I+z) = I+z$ if and only~if {$\Phi(g)_k \in \gcd(t-1,\ell_k) \mathbb{Z}_{\ell_k}$} for each $1 \le k \le r$.   Moreover, if such a $z$ exists, there are {$\prod_{1\le i\le r} \gcd(t-1,\ell_i)$ such $z$'s.}
\end{thm}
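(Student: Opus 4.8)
The plan is to reduce the fixed-translate condition to a single linear equation over $G$ and then invoke Lemma~\ref{lem:frob}. First I would compute the action of $t$ on an arbitrary translate: using $t(I+z)=tI+tz$ together with the hypothesis $tI=I+g$, I obtain $t(I+z)=I+g+tz$. Imposing the desired fixing condition $t(I+z)=I+z$ then becomes the statement $I+(g+tz-z)=I$.

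Next I would invoke non-periodicity. Since $I$ is non-periodic, the only element of $G$ translating $I$ to itself is $0$; hence the displayed equality holds if and only if $g+tz-z=0$, that is, $(t-1)z=-g$. Consequently the set of $z\in G$ with $t(I+z)=I+z$ coincides exactly with the solution set of the finite abelian group equation $(t-1)z=-g$ in the unknown $z$.

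Now I would apply Lemma~\ref{lem:frob} with $m=t-1$ and $a=-g$. That lemma asserts that a solution exists if and only if $\Phi(-g)_k\in\gcd(t-1,\ell_k)\mathbb{Z}_{\ell_k}$ for every $1\le k\le r$, and that whenever a solution exists the number of solutions equals $\prod_{1\le i\le r}\gcd(t-1,\ell_i)$. The only remaining point is to restate the solvability criterion in terms of $g$ rather than $-g$: because $\gcd(t-1,\ell_k)\mathbb{Z}_{\ell_k}$ is a subgroup of $\mathbb{Z}_{\ell_k}$ it is closed under negation, and $\Phi(-g)_k=-\Phi(g)_k$ since $\Phi$ is an isomorphism, so $\Phi(-g)_k\in\gcd(t-1,\ell_k)\mathbb{Z}_{\ell_k}$ precisely when $\Phi(g)_k\in\gcd(t-1,\ell_k)\mathbb{Z}_{\ell_k}$. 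This yields both the claimed equivalence and the claimed count.

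I do not expect a genuine obstacle, as the argument is a short computation followed by a single application of Lemma~\ref{lem:frob}. The only place demanding any care is the final sign bookkeeping, namely transporting the criterion from $-g$ to $g$, which is immediate from the negation-closedness of the subgroup. One should also confirm that $t-1$ is interpreted as the integer acting on $G$, so that $\gcd(t-1,\ell_k)$ is meaningful for $t\in\mathbb{Z}_{\ellstar}^{\times}$, which holds by the standing conventions.
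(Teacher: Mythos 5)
Your proposal is correct and follows essentially the same route as the paper: reduce $t(I+z)=I+z$ via non-periodicity to the single equation $(t-1)z=-g$ and apply Lemma~\ref{lem:frob}. Your explicit remark that the criterion for $-g$ transfers to $g$ because $\gcd(t-1,\ell_k)\mathbb{Z}_{\ell_k}$ is closed under negation is a detail the paper leaves implicit, but the argument is the same.
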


\begin{comment}
\begin{proof} 
Since $I$ is non-periodic, observe $t(I+z) = I+z$ for some $z\in G$ if~and only~if $-g = (t-1)z$ for some $z\in G$.  By Lemma~\ref{lem:frob}, the latter has a solution if and only~if $$\Phi(g)_k \in \gcd(t-1,\ell_k) \mathbb{Z}_{\ell_k}$$ for each $1 \le k \le r$. 
 Moreover, by 
Lemma~\ref{lem:frob}, 
there are $\prod_{1\le i\le r} \gcd(t-1,\ell_i)$ such $z$'s 
if one such $z$ exists.
\end{proof}
\end{comment}

{\blue The next theorem also has an identical proof with the exception that
there is a one-to-one correspondence between the translates of $\Phi(I)$ fixed by  $t$ and the translates of  $I$ fixed by $t$. (It is not needed and it is not used that $\Phi(I)\subseteq \ZZZ$.)
}

\begin{thm}\label{thm:OneFixAll-dan}
Let $G$ be a finite abelian group with exponent $\ellstar$ and $\Phi(G)=\ZZZ$, where $\Phi$ is an isomorphism. Let $I$ be a multiset with elements from $G$ that is non-periodic with multiplier group $H$. Let 
$K=\langle t_1,\ldots, t_m\rangle\leq H$, $t_iI=I+g_i$ for $1\le i\le m$, and  {$C=\gcd(t_1-1,\ldots,t_m-1,\ellstar)$.} Then the following hold.
\begin{enumerate}
%\item  For each $1\le i\le m$ there exists some $z_i\in G$ such that $I+z_i$ is fixed by $t_i$ if and only if for each $1\le i\le m$ and $1\le k\le r$, $( \Phi(g_i) )_k \in \gcd(t_i-1,\ell_k) \mathbb{Z}_{\ell_k}$. \label{cond:exits}

 \item If $C=1$, then there exists a translate $I+z$ fixed by all multipliers $t\in K$.\label{cond:unique}
 \item For each $1 \le j \le r$, let $1 \le i_j \le \gcd(C,\ell_j)$ and $\Phi( h_j' )$ be defined such that
  $$\langle \Phi(h'_j)\rangle=
 \langle0\rangle\times\cdots\times\mathbb{Z}_{\ell_j}\times\cdots\times\langle0\rangle.$$  If a $z'_0\in G$ exists such that $I+z_0'$ is fixed by $K$, then
%  all the translates 
  $I+z'$ 
  is fixed by $K$ 
  %satisfy  
  for
$$z'=z'_0+h'_1i_1\frac{\ell_1}{\gcd(C,\ell_1)}+\cdots+h'_ri_r\frac{\ell_r}{\gcd(C,\ell_r)}. $$
%for each $ 1 \leq i_j \leq \gcd(C,\ell_j).$ 
  Moreover, these are the precisely  $\prod_{1\le i\le r}\gcd(C,\ell_i)$ distinct such $z'\in G$. \label{cond:count} 
 \end{enumerate}
\end{thm}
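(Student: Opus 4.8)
The plan is to reduce both parts to a single linear system over $G$ and then solve it coordinatewise through $\Phi$. First I would record the elementary reduction already used in Theorem~\ref{thm:gcd}: since $I$ is non-periodic, a translate $I+z$ is fixed by a generator $t_i$ if and only if $(t_i-1)z=-g_i$, because $t_i(I+z)=t_iI+t_iz=I+g_i+t_iz$ equals $I+z$ exactly when $g_i+t_iz=z$ by uniqueness of the translate (Lemma~\ref{lem:unique}). Moreover, the multipliers fixing a given translate form a subgroup, so $I+z$ is fixed by every $t\in K=\langle t_1,\ldots,t_m\rangle$ iff it is fixed by each generator. Hence the $K$-fixed translates are exactly the solutions $z\in G$ of
\[
(t_i-1)z=-g_i,\qquad i=1,\ldots,m.
\]
The one structural input I would need is the compatibility relation $(t_i-1)g_j=(t_j-1)g_i$, which follows from $t_it_j=t_jt_i$ in $\mathbb{Z}_{\ellstar}^\times$: applying both sides to $I$ gives $I+(g_i+t_ig_j)=I+(g_j+t_jg_i)$, and non-periodicity yields $g_i+t_ig_j=g_j+t_jg_i$.

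For part~(\ref{cond:unique}), assuming $C=1$, I would exhibit an explicit solution. Bézout gives integers $a_1,\ldots,a_m,b$ with $\sum_i a_i(t_i-1)+b\ellstar=1$, and I claim $z:=-\sum_i a_ig_i$ works. Applying $(t_j-1)$ and using compatibility to transfer the factor $(t_j-1)$ onto the summation index,
\[
(t_j-1)z=-\sum_i a_i(t_j-1)g_i=-\Big(\sum_i a_i(t_i-1)\Big)g_j=-(1-b\ellstar)g_j=-g_j,
\]
where the last step uses $\ellstar g_j=0$. This produces a $K$-fixed translate.

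For part~(\ref{cond:count}), I would describe the full solution set. If $z_0'$ is one solution, the solution set is the coset $z_0'+L$ with $L=\{z\in G:(t_i-1)z=0\ \text{for all }i\}$ the common kernel. Setting $d=\gcd(t_1-1,\ldots,t_m-1)$, a coordinatewise argument through $\Phi$ gives $L=\{z:dz=0\}$, since each $t_i-1$ is a multiple of $d$ and $d$ is a $\mathbb{Z}$-combination of the $t_i-1$. In the $k$-th coordinate $\mathbb{Z}_{\ell_k}$, the solutions of $dz_k\equiv 0$ are the multiples of $\ell_k/\gcd(d,\ell_k)$; because $\ell_k\mid\ellstar$ one has $\gcd(d,\ell_k)=\gcd(\gcd(d,\ellstar),\ell_k)=\gcd(C,\ell_k)$, so that coordinate contributes exactly $\gcd(C,\ell_k)$ solutions and $|L|=\prod_k\gcd(C,\ell_k)$. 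Finally I would match the stated enumeration: since $\Phi(h_j')$ generates the $j$-th factor, as $i_j$ ranges over $1,\ldots,\gcd(C,\ell_j)$ the term $h_j'i_j\frac{\ell_j}{\gcd(C,\ell_j)}$ runs through all multiples of $\ell_j/\gcd(C,\ell_j)$ in that factor; taking the direct sum over $j$ hits every element of $L$ exactly once, so the listed $z'=z_0'+\sum_j h_j'i_j\frac{\ell_j}{\gcd(C,\ell_j)}$ are precisely the $\prod_k\gcd(C,\ell_k)$ distinct $K$-fixed translates.

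I expect the main obstacle to be part~(\ref{cond:unique}): the compatibility relation $(t_i-1)g_j=(t_j-1)g_i$ is the single nonroutine ingredient, and the key point is that it is exactly what allows the \emph{global} Bézout combination $z=-\sum_i a_ig_i$ to satisfy every equation of the system at once rather than one equation at a time. The rest—recognizing the solution set as a coset, collapsing the common kernel to $\ker(d\cdot)$, and verifying the generators $h_j'$ enumerate $L$ without repetition—is routine number theory once the reduction to the displayed system is established.
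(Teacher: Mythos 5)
Your proof is correct and follows essentially the same route as the paper's: the same reduction (via non-periodicity and Lemma~\ref{lem:unique}) to the linear system $(t_i-1)z=-g_i$, the same commutativity-derived relation $(t_j-1)g_i=(t_i-1)g_j$, and the same B\'ezout witness $z=-\sum_i a_ig_i$ for part~1. The only, harmless, divergence is in part~2, where you identify the solution set as a coset of the homogeneous kernel $\{z\in G:(t_i-1)z=0\ \forall i\}=\{z\in G:\gcd(t_1-1,\ldots,t_m-1)\,z=0\}$ and count it coordinatewise, whereas the paper instead bounds the count from above by passing to the single consequence $Cz=-\sum_i a_ig_i$ and invoking Lemma~\ref{lem:frob}, then exhibits that many distinct solutions; both arguments produce the same enumeration.
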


{\blue The following results are the multiset versions of those found in~\cite{decimation}.  The proofs are identical for finite multisets with the same properties}. 

\begin{lma}\label{lem:|D|dan}
Let $G$ be a finite abelian group with exponent $\ellstar$ and $\Phi(G) = \ZZZ$, where $\Phi$ is an isomorphism. 
Let $I$ be a finite, non-periodic multiset with elements from~$G$ and $\sigma=\sum_{i\in I} i$.  
Let $t\in \mathbb{Z}_\ellstar^\times$ be such that $tI = I + g_0$ for some $g_0\in G$. 
For each $1 \le k \le r$, let $d_k = \gcd(|I|,\ell_k)$ and $e_k$ be the $k$'th column of the $r\times r$ identity matrix.  
Then, $(t-1)\Phi(\sigma)_k / d_k \in \mathbb{Z}$ for each $1\le k\le r$. Moreover, $g_0$ is unique and one of the $\prod_{1\le i\le r} \gcd(|I|,\ell_i)$ solutions to $|I| g = (t-1)\sigma$ for $g\in G$, namely \[g = \sum_{1\le k\le r}  \left(\left( \frac{| I | }{d_k} \right)^{\phi\left(\frac{\ell_k}{d_k}\right)-1} \frac{(t-1) \Phi(\sigma)_k}{d_k} + \frac{\ell_k}{d_k} \, j_k \right) \Phi^{-1}(e_k)\]where each choice of $j_k$ satisfies $0\le j_k < d_k$ for each $1\le k\le r$. 
\end{lma}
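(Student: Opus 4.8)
The plan is to reduce the entire statement to the single group equation $|I|g = (t-1)\sigma$ and then solve it coordinate by coordinate through the isomorphism $\Phi$. First I would invoke Lemma~\ref{lem:|D|}: summing all elements on both sides of $tI = I + g_0$ yields $t\sigma = \sigma + |I|g_0$, so $|I|g_0 = (t-1)\sigma$ and hence $g_0$ is a solution of $|I|g = (t-1)\sigma$. Uniqueness of $g_0$ is then immediate from Lemma~\ref{lem:unique}, since $I$ is non-periodic. The count follows from Lemma~\ref{lem:frob} applied with $m = |I|$ and $a = (t-1)\sigma$: because the solution $g_0$ exists, the equation has exactly $\prod_{1\le i\le r}\gcd(|I|,\ell_i)$ solutions, and the existence criterion of that lemma gives $\Phi((t-1)\sigma)_k \in \gcd(|I|,\ell_k)\mathbb{Z}_{\ell_k} = d_k\mathbb{Z}_{\ell_k}$ for each $k$, which is precisely the assertion $(t-1)\Phi(\sigma)_k/d_k \in \mathbb{Z}$.

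For the explicit formula, I would pass through $\Phi$ to rewrite $|I|g = (t-1)\sigma$ as the $r$ independent congruences $|I|\,\Phi(g)_k \equiv (t-1)\Phi(\sigma)_k \pmod{\ell_k}$. Dividing the $k$-th congruence by $d_k = \gcd(|I|,\ell_k)$ — legitimate because the first part guarantees $d_k \mid (t-1)\Phi(\sigma)_k$, while $d_k \mid |I|$ and $d_k \mid \ell_k$ by definition — produces $\tfrac{|I|}{d_k}\Phi(g)_k \equiv \tfrac{(t-1)\Phi(\sigma)_k}{d_k} \pmod{\ell_k/d_k}$, where now $\gcd(|I|/d_k,\ell_k/d_k)=1$. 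Euler's theorem then inverts $|I|/d_k$ modulo $\ell_k/d_k$ as $(|I|/d_k)^{\phi(\ell_k/d_k)-1}$, pinning down $\Phi(g)_k$ modulo $\ell_k/d_k$. The $d_k$ lifts to $\mathbb{Z}_{\ell_k}$ are obtained by adding the multiples $\tfrac{\ell_k}{d_k}j_k$ with $0 \le j_k < d_k$, and reassembling $g = \sum_{1\le k\le r}\Phi(g)_k\,\Phi^{-1}(e_k)$ yields the displayed formula; the number of coordinatewise choices is $\prod_{1\le k\le r} d_k$, matching the count from Lemma~\ref{lem:frob}.

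The argument is routine, so there is no deep obstacle; the main care is in the bookkeeping when dividing the congruences by $d_k$ and tracking the modulus reduction from $\ell_k$ to $\ell_k/d_k$, so that the Euler inverse is taken in the correct ring and the number of lifts is exactly $d_k$. I would also be careful to emphasize the distinction between the two multiplicity assertions, since they could appear to conflict: the shift $g_0$ with $tI = I + g_0$ is \emph{unique} by non-periodicity (Lemma~\ref{lem:unique}), whereas the auxiliary equation $|I|g = (t-1)\sigma$ is only a necessary consequence of $tI = I+g_0$ and has $\prod_{1\le k\le r} d_k$ solutions in general, of which $g_0$ is merely one. The displayed formula, with $j_k$ ranging freely, enumerates exactly this full solution set, and the theorem asserts only that $g_0$ lies among them.
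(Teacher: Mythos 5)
Your proposal is correct and follows essentially the same route as the paper's proof: sum the elements of $tI=I+g_0$ to obtain $|I|g_0=(t-1)\sigma$, get uniqueness of $g_0$ from non-periodicity via Lemma~\ref{lem:unique}, apply Lemma~\ref{lem:frob} for the solution count and the divisibility condition $(t-1)\Phi(\sigma)_k/d_k\in\mathbb{Z}$, and solve the coordinatewise congruences by dividing by $d_k$ and inverting $|I|/d_k$ modulo $\ell_k/d_k$ with Euler's theorem. No substantive differences.
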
 

Based on Lemmas~\ref{lem:unique} and~\ref{lem:|D|dan}, we have the following definition.
\begin{dfn}\label{dfn:Jt}
Let $G$ be a finite abelian group with exponent $\ellstar$ and $\Phi(G) = \ZZZ$, where $\Phi$ is an isomorphism. 
Let $I$ be a finite, non-periodic multiset with elements from~$G$ and $\sigma=\sum_{i\in I} i$. 
Let $t\in \mathbb{Z}_\ellstar^\times$ be such that $tI = I + g_0$ for some $g_0\in G$. 
For each $1 \le k \le r$, let $d_k = \gcd(|I|,\ell_k)$ and $e_k$ be the $k$'th column of the $r\times r$ identity matrix.  
Then, define $j_1(t),\ldots,j_r(t)$ to be the necessarily unique integers (by 
Lemmas~\ref{lem:unique} and~\ref{lem:|D|dan}) such that $0\leq j_k<d_k$ for each $1 \leq k\leq r$ and
 $$g_0=\sum_{1\le k\le r}\left(\left(\frac{|I|}{d_k}\right)^{\phi\left(\frac{\ell_k}{d_k}\right)-1}\frac{(t-1)\Phi(\sigma)_k}{d_k} + \frac{\ell_k}{d_k}j_k(t)\right)\Phi^{-1}(e_k).$$ 
\end{dfn}

%Next, we provide a necessary and sufficient condition for a multiplier $t$ of $I$ to fix at least one translate $I+z$ for some $z\in G$ when $G$ is a finite abelian  group with exponent $\ellstar$.   

\begin{thm}\label{cyclicthm}
Let $G$ be a finite abelian group with exponent $\ellstar$ and $\Phi(G) = \ZZZ$, where $\Phi$ is an isomorphism. 
Let $I$ be a finite, non-periodic multiset with elements from~$G$.  Let $t\in \mathbb{Z}_\ellstar^\times$ be a multiplier of $I$ and $\sigma=\sum_{i\in I} i$.  
For each $1 \le k \le r$, let $d_k = \gcd(|I|,\ell_k)$ and $e_k$ be the $k$'th column of the $r\times r$ identity matrix.  
Let $j_1(t),\ldots,j_r(t)$ be as in Definition~\ref{dfn:Jt}. Then,  $z\in G$ is a solution to $t(I+z)=I+z$ if and only~if  $z\in G$ is a solution to 
 \begin{equation}\label{tgt}
(t-1)z = -\sum_{1\le k\le r}\left(\left(\frac{|I|}{d_k}\right)^{\phi\left(\frac{\ell_k}{d_k}\right)-1}\frac{(t-1)\Phi(\sigma)_k}{d_k} + \frac{\ell_k}{d_k}{j_k}(t)\right)\Phi^{-1}(e_k).
 \end{equation}
\end{thm}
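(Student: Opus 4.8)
The plan is to recognize that equation~(\ref{tgt}) is nothing more than the single group equation $(t-1)z = -g_0$ written out explicitly, where $g_0$ is the unique element of $G$ with $tI = I + g_0$ supplied by Lemma~\ref{lem:unique}. Thus the entire statement reduces to translating the fixing condition $t(I+z) = I+z$ into a linear equation in $z$, and then substituting the closed form for $g_0$ from Definition~\ref{dfn:Jt}.

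First I would unwind the action of $t$ on the translate $I+z$. Since $t$ is a multiplier, $tI = I + g_0$, and because the multiplication action is a group action one has, for every $z\in G$,
\[
t(I+z) = tI + tz = I + g_0 + tz,
\]
so $t(I+z)$ is precisely the translate of $I$ by $g_0 + tz$.

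Next I would use that $I$ is non-periodic to compare translates. If $I + a = I + b$ for $a,b\in G$, then $I + (a-b) = I$, and non-periodicity forces $a - b = 0$; hence distinct elements of $G$ yield distinct translates of $I$. Applying this to the identity above, $t(I+z) = I+z$ holds if and only if $g_0 + tz = z$, that is, if and only if $(t-1)z = -g_0$.

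Finally I would invoke Definition~\ref{dfn:Jt}, by which
\[
g_0 = \sum_{1\le k\le r}\left(\left(\frac{|I|}{d_k}\right)^{\phi\left(\frac{\ell_k}{d_k}\right)-1}\frac{(t-1)\Phi(\sigma)_k}{d_k} + \frac{\ell_k}{d_k}j_k(t)\right)\Phi^{-1}(e_k),
\]
which is exactly the (negated) right-hand side of equation~(\ref{tgt}). Substituting this into $(t-1)z = -g_0$ reproduces~(\ref{tgt}) verbatim, so $z$ solves $t(I+z)=I+z$ if and only if $z$ solves~(\ref{tgt}), completing the equivalence. The only real obstacle is bookkeeping rather than mathematics: one must be sure the closed form for $g_0$ in Definition~\ref{dfn:Jt} is legitimate, which is guaranteed by Lemmas~\ref{lem:unique} and~\ref{lem:|D|dan} (uniqueness of $g_0$ together with its explicit expression as a solution of $|I|g = (t-1)\sigma$). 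With those in hand, the theorem is a direct transcription of the fixing condition into the stated group equation.
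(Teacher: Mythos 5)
Your proposal is correct and matches the paper's own argument: both reduce the fixing condition to $(t-1)z=-g_0$ via non-periodicity and then substitute the explicit form of $g_0$ from Lemma~\ref{lem:|D|dan} and Definition~\ref{dfn:Jt}. Your version simply spells out the intermediate step $t(I+z)=tI+tz=I+g_0+tz$, which the paper leaves implicit.
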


\begin{comment}
\begin{proof}
 By Lemma~\ref{lem:|D|dan} and Definition~\ref{dfn:Jt}, if $t$ is a multiplier of $I$, then  $$tI=I + 
 \sum_{1\le k\le r}\left(\left(\frac{|I|}{d_k}\right)^{\phi\left(\frac{\ell_k}{d_k}\right)-1}\frac{(t-1)\Phi(\sigma)_k}{d_k} + 
 \frac{\ell_k}{d_k}{j_k}(t)\right)\Phi^{-1}(e_k).$$
Since $I$ is a non-periodic multiset, we deduce that $t(I+z) = I + z$ if and only if  $z\in G$ is a solution to equation~(\ref{tgt}).
\end{proof}

In an attempt to simplify the condition in 
Theorem~\ref{cyclicthm} and derive practical results, we prove the following corollary and the subsequent theorem.
\end{comment}

\begin{crly}\label{fixtrans}
Let $G$ be a finite abelian group with exponent $\ellstar$ and $\Phi(G) = \ZZZ$, where $\Phi$ is an isomorphism. 
Let $I$ be a finite, non-periodic multiset with elements from~$G$.  Let $t\in \mathbb{Z}_\ellstar^\times$ be a multiplier of $I$ and $\sigma=\sum_{i\in I} i$.  
For each $1 \le k \le r$, let $d_k = \gcd(|I|,\ell_k)$ and $e_k$ be the $k$'th column of the $r\times r$ identity matrix.  
Let $j_1(t),\ldots,j_r(t)$ be as in Definition~\ref{dfn:Jt}. 
Then, $t$ fixes at least one translate $I + z$ for some $z\in G$ if and only if
\begin{equation}\label{div-cond}
d_k\cdot \gcd(t-1,\ell_k) {\rm~divides~} \left( \frac{| I |}{d_k} \right)^{\phi\left(\frac{\ell_k}{d_k}\right)-1} (t-1) \Phi(\sigma)_k + \ell_k \, j_k(t)
\end{equation}
for each $1\le k\le r$.
\end{crly}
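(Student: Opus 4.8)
The plan is to read off the result directly from Theorem~\ref{cyclicthm} by decomposing equation~(\ref{tgt}) coordinatewise through $\Phi$ and then applying the elementary solvability criterion for a linear congruence. First I would invoke Theorem~\ref{cyclicthm}: since $I$ is non-periodic, $t$ fixes some translate $I+z$ if and only if equation~(\ref{tgt}) admits a solution $z\in G$. Because $\Phi\colon G\to\ZZZ$ is an isomorphism, applying $\Phi$ to~(\ref{tgt}) turns this single group equation into a system of $r$ independent congruences, the $k$'th of which is
\[
(t-1)\Phi(z)_k \equiv -\left(\left(\frac{|I|}{d_k}\right)^{\phi\left(\frac{\ell_k}{d_k}\right)-1}\frac{(t-1)\Phi(\sigma)_k}{d_k} + \frac{\ell_k}{d_k}j_k(t)\right)\pmod{\ell_k}.
\]
Since the coordinates are independent, the existence of $z$ is equivalent to the separate solvability of each of these $r$ congruences.

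Next I would apply the standard fact that a linear congruence $(t-1)x\equiv b_k \pmod{\ell_k}$ has a solution if and only if $\gcd(t-1,\ell_k)\mid b_k$, where here $b_k$ denotes the right-hand side above. The key observation, supplied by Lemma~\ref{lem:|D|dan} together with Definition~\ref{dfn:Jt}, is that this right-hand side is exactly $-\Phi(g_0)_k$, the $k$'th coordinate of the unique solution $g_0$ to $|I|g=(t-1)\sigma$; in particular $b_k$ is a genuine integer, so the displayed expression with the $1/d_k$ factors is integral.

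Finally I would clear the denominator $d_k$. Because $b_k\in\mathbb{Z}$, the divisibility statement $\gcd(t-1,\ell_k)\mid b_k$ is equivalent to $d_k\,\gcd(t-1,\ell_k)\mid d_k b_k$, and
\[
d_k b_k = -\left(\left(\frac{|I|}{d_k}\right)^{\phi\left(\frac{\ell_k}{d_k}\right)-1}(t-1)\Phi(\sigma)_k + \ell_k\,j_k(t)\right).
\]
As divisibility is insensitive to sign, this is precisely condition~(\ref{div-cond}) for each $1\le k\le r$, completing the equivalence.

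The step I expect to be the main obstacle is the integrality bookkeeping: one must be certain that $b_k$ is an integer before multiplying the divisibility condition through by $d_k$, since otherwise passing between ``$\gcd(t-1,\ell_k)$ divides $b_k$'' and ``$d_k\gcd(t-1,\ell_k)$ divides $d_k b_k$'' would not be a valid equivalence. This is exactly what Lemma~\ref{lem:|D|dan} guarantees, so the whole argument hinges on correctly identifying the right-hand side of~(\ref{tgt}) with $-\Phi(g_0)$ rather than on any new computation.
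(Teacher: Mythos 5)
Your proposal is correct and follows essentially the same route as the paper's proof: invoke Theorem~\ref{cyclicthm}, push equation~(\ref{tgt}) through $\Phi$ into $r$ independent linear congruences, apply the standard solvability criterion $\gcd(t-1,\ell_k)\mid b_k$, and multiply through by $d_k$. Your explicit remark that the right-hand side is an integer (guaranteed by Lemma~\ref{lem:|D|dan}) before scaling the divisibility condition is a point the paper passes over silently, but it is the same argument.
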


\begin{comment}
\begin{proof}
To ease some of the notation, let 
\[
b_k = \left( \frac{| I |}{d_k} \right)^{\phi\left(\frac{\ell_k}{d_k}\right)-1} \text{~for each $1\le k\le r$.}
\]
Since $\Phi$ is an isomorphism, a solution $z\in G$ to~(\ref{tgt}) %(\ref{10inThm3}) %CHANGE THIS
exists if and only if there exists $\Phi(z)\in \ZZZ$ such that
\[
(t-1)\Phi(z)_k \equiv - b_k \frac{(t-1) \Phi(\sigma)_k}{d_k} - \frac{\ell_k}{d_k} \, j_k(t) \quad \text{(mod } \ell_k) \text{~for each $1\le k\le r$.}
\]
This system of congruences has a solution $\Phi(z)$ if and only if the integer divisibility 
\[
\gcd(t-1,\ell_k) \text{~divides~} b_k \frac{(t-1) \Phi(\sigma)_k}{d_k} + \frac{\ell_k}{d_k} \, j_k(t)
\]
holds for each $1\le k\le r$.  Multiply  by $d_k$ for $1\le k\le r$ to obtain the equivalent condition in~(\ref{div-cond}).
\end{proof}
\end{comment}

\begin{thm}\label{thm:Dplusalpha}
Let $G\cong\ZZZ$ have exponent $\ellstar$.  
Let $I$ be a finite, non-periodic multiset with elements from~$G$ such that $\gcd(|I|,\ellstar)=1$, and let $t\in \mathbb{Z}_\ellstar^\times$ be a multiplier of~$I$.   
Then, there are  exactly  $\prod_{1\le i\le r} \gcd(t-1,\ell_i)$ distinct $z\in G$ such that $t(I+z)=I+z$. Moreover, 
$z = -|I|^{\phi(\ellstar)-1} (\sum_{i \in I}i)$ is a solution that does not depend on $t$. Hence, there is a $z\in G$ such that  $I+z$ is fixed by the multiplier group $H$ of $I$.
\end{thm}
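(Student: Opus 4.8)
The plan is to reduce the fixed-translate condition to a single group equation, count its solutions with Lemma~\ref{lem:frob}, and then write down one explicit solution that does not involve $t$. First I would linearize the condition. Since $t$ is a multiplier, Lemma~\ref{lem:unique} furnishes a unique $g\in G$ with $tI=I+g$, and for any $z\in G$ we have $t(I+z)=tI+tz=I+(g+tz)$. Because $I$ is non-periodic, this equals $I+z$ exactly when $g+tz=z$, i.e. when
\[
(t-1)z=-g.
\]
So the translates $I+z$ fixed by $t$ correspond precisely to the solutions $z\in G$ of $(t-1)z=-g$. Applying Lemma~\ref{lem:frob} with $m=t-1$ and $a=-g$, the number of such solutions is either $0$ or $\prod_{1\le i\le r}\gcd(t-1,\ell_i)$; hence it suffices to exhibit a single solution in order to pin the count down to exactly $\prod_{1\le i\le r}\gcd(t-1,\ell_i)$.

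Next I would produce the explicit solution and observe that it is independent of $t$. Here the hypothesis $\gcd(|I|,\ellstar)=1$ does the essential work: it makes $|I|$ a unit modulo $\ellstar$, so $|I|^{\phi(\ellstar)}\equiv1\ (\mathrm{mod}\ \ellstar)$ and multiplication by $|I|^{\phi(\ellstar)-1}$ inverts multiplication by $|I|$ on $G$. By Lemma~\ref{lem:|D|}, the unique $g$ above satisfies $g=|I|^{\phi(\ellstar)-1}(t-1)\sigma$, where $\sigma=\sum_{i\in I}i$. Setting $z_0=-|I|^{\phi(\ellstar)-1}\sigma$, I would compute
\[
(t-1)z_0=-(t-1)|I|^{\phi(\ellstar)-1}\sigma=-g,
\]
so $z_0$ solves $(t-1)z=-g$. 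This confirms existence, forcing the count to be exactly $\prod_{1\le i\le r}\gcd(t-1,\ell_i)$, and, crucially, the factor $(t-1)$ cancels, so $z_0$ depends only on $|I|$, $\sigma$, and $\ellstar$, not on the particular multiplier $t$.

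Finally I would deduce the global statement. Since the one element $z_0=-|I|^{\phi(\ellstar)-1}\sigma$ simultaneously solves $(t-1)z=-g_t$ for every multiplier $t\in H$ (where $tI=I+g_t$), the translate $I+z_0$ is fixed by each $t\in H$, and therefore by the entire multiplier group $H$. I do not expect a genuine obstacle here; the only point needing care is recognizing that the ostensibly $t$-dependent shift $g$ factors as $(t-1)$ times the $t$-free quantity $|I|^{\phi(\ellstar)-1}\sigma$, which is exactly what permits a uniform choice of $z_0$. This factorization is available precisely because $\gcd(|I|,\ellstar)=1$; when this gcd exceeds $1$ the conclusion can fail, since then no single shift need satisfy all the equations $(t-1)z=-g_t$ at once.
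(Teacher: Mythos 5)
Your proof is correct and follows essentially the same route as the paper: reduce the fixed-translate condition to $(t-1)z=-g$ via non-periodicity, use the factorization $g=|I|^{\phi(\ellstar)-1}(t-1)\sigma$ from Lemma~\ref{lem:|D|} to exhibit the $t$-independent solution $z_0=-|I|^{\phi(\ellstar)-1}\sigma$, and count via Lemma~\ref{lem:frob}. The only cosmetic difference is that the paper routes the reduction and the count through Theorem~\ref{cyclicthm} and Theorem~\ref{thm:OneFixAll-dan} part~\ref{cond:count}, whereas you argue directly from Lemmas~\ref{lem:unique}, \ref{lem:|D|}, and \ref{lem:frob}, which is, if anything, a bit cleaner.
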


\begin{comment}
\begin{proof}
Since $\gcd(|I|,\ellstar)=1$, $I$ is non-periodic by Lemma~\ref{lem:necklace}.
By Theorem~\ref{cyclicthm}, it suffices to find the solutions to equation (\ref{tgt}).  
Let $\sigma=\sum_{i \in I}i$. Then, equation~(\ref{tgt}) becomes 
\begin{equation}\label{tgtd1}
(t-1)z = -|I|^{\phi(\ellstar)-1}(t-1)\sigma.
\end{equation} 
Observe that equation~(\ref{tgtd1}) has $$z_0=-|I|^{\phi(\ellstar)-1}\sigma \in G$$ as a solution that does not depend on $t$. 
Then, there is a $z\in G$ such that $I+z$ is fixed by the multiplier group $H$ of $I$. Now, the
solutions $z\in G$ such that $I+z$ is fixed by $\langle t \rangle$ are given in Theorem~\ref{thm:OneFixAll-dan} part~\ref{cond:count}. 
Since $\gcd(\gcd(t-1,\ellstar),\ell_i) = \gcd(t-1,\ell_i)$ for each $1\le i\le r$, there are exactly $\prod_{1\le i\le r} \gcd(t-1,\ell_i)$ distinct such solutions in this case. Thus, there are at least this many solutions $z\in G$ such that $I+z$ is fixed by just $t$. By Lemma~\ref{lem:frob}, %\ref{lemma5},
there are no more solutions to~(\ref{tgtd1}). %\ref{eqn12}).
Hence, the solutions given in Theorem~\ref{thm:OneFixAll-dan} part~\ref{cond:count}
provides all the solutions to~(\ref{tgtd1}). 
\end{proof}
\end{comment}

\section{A sufficient condition for fixed translates from the adjacency matrix} \label{sec:adjacencymatrix}
A multiplier $t$ of $I$ is called {\em translate fixing} if there exists at least one $z \in G$ such that $t(I+z)=I+z$. 
 First, we introduce the concept of the adjacency matrix of a multiset $I$, where $I$ has elements from a finite abelian group $G$. This is then used to derive a sufficient condition for each multiplier of $I$ to be translate fixing. 
 
For a finite multiset $I$ with elements from $G$, the \textit{adjacency matrix} of $I$, denoted by $\T_I$, is defined by 
\begin{equation}\label{eqn:IncidenceMatrix}
\T_I(i,j)=\m_{I+g_i}(g_j)
\end{equation}
for $0\le i,j\le |G|-1$; that is, $\T_I(i,j)$ is defined to be the multiplicity of $g_j$ in $I+g_i$.

Let $\mP_g$ and $\mQ_t$ be permutation matrices such that
\begin{equation}\label{PQeqns}
\mP_g\T_I = \T_{I+g}~\quad\text{and}\quad~~\mQ_t^\top\T_I\mQ_t = \T_{tI}.
\end{equation}
These permutation matrices have the following properties, which will be used repeatedly.
\begin{itemize}
	\item $\mP_g\mP_h=\mP_{h+g}=\mP_h\mP_g$
	\item $\mQ_s^\top\mQ_t^\top\T_I\mQ_t\mQ_s = \mQ_{st}^\top\T_I\mQ_{st} = \mQ_t^\top\mQ_s^\top\T_I\mQ_s\mQ_t$
	\item $\mQ_t^\top\mP_g\mQ_t=\mP_{tg}$
	\item $\mP_{g}^\top=\mP_{g}^{-1}=\mP_{-g}; \quad \mQ_{t}^\top=\mQ_{t}^{-1}=\mQ_{t^{-1}}$
\end{itemize}

{\blue The following lemmas demonstrate how the permutation matrices $\mP_g$ and $\mQ_t$ act on $\T_I$.  The proof of the next result is the same as that found in~\cite{decimation}.}

\begin{lma}\label{lma:stab1}
Let $G$ be a finite abelian group with exponent $\ellstar$.  Let $I$ be a finite multiset with elements from $G$.  If $t\in \mathbb{Z}^\times_{\ellstar}$ is a multiplier of $I$, then $t$ is a multiplier of every translate of~$I$.
\end{lma}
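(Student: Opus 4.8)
The plan is to reduce the claim to the single affine identity that the multiplication action of $t$ distributes over translation by a group element. First I would recall what it means for $t$ to be a multiplier of $I$: there is some $g \in G$ with $tI = I + g$. Fixing an arbitrary translate $I + \alpha$ with $\alpha \in G$, the goal is to produce an element $g' \in G$ for which $t(I+\alpha) = (I+\alpha) + g'$, since this is exactly the condition for $t$ to be a multiplier of $I + \alpha$. Because every element of $H$ behaves this way, it then suffices to treat a single multiplier $t$.

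The central computation I would carry out is the distributivity $t(I + \alpha) = tI + t\alpha$. This rests on the defining formulas $hI = \{(hi)^{\m_I(i)} \mid i \in I\}$ and $I + g = \{(i+g)^{\m_I(i)} \mid i \in I\}$: multiplying every element $i + \alpha$ of $I + \alpha$ by $t$ gives $ti + t\alpha$ while leaving the multiplicity $\m_I(i)$ untouched, so the resulting multiset is precisely the translate of $tI$ by $t\alpha$. Substituting $tI = I + g$ then yields
\[
t(I+\alpha) = (I + g) + t\alpha = (I + \alpha) + \bigl(g + (t-1)\alpha\bigr).
\]
Setting $g' = g + (t-1)\alpha \in G$ exhibits $t(I+\alpha)$ as a translate of $I + \alpha$, so $t$ is a multiplier of $I + \alpha$; as $\alpha$ was arbitrary, $t$ is a multiplier of every translate of $I$.

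I expect no genuine obstacle here. The only point requiring care is the distributivity $t(I+\alpha) = tI + t\alpha$ for multisets rather than ordinary sets, and this is immediate once one checks that both operations act elementwise and preserve multiplicities, exactly as in the set case treated in~\cite{decimation}. In particular, finiteness of $I$ plays no role in the argument, even though the lemma is stated for finite $I$ to match the surrounding development.
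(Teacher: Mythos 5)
Your proof is correct. The paper proves this lemma differently in form: it works entirely inside the adjacency-matrix formalism introduced at the start of Section~3, writing the hypothesis as $\mQ_t^\top\T_I\mQ_t=\mP_g\T_I$ and then computing $\mQ_t^\top\T_{I+h}\mQ_t=\mQ_t^\top\mP_h\mQ_t\mQ_t^\top\T_I\mQ_t=\mP_{th}\mP_g\T_I=\mP_{th+g}\T_I$, which reads off $t(I+h)=I+(th+g)$. Your direct multiset computation $t(I+\alpha)=tI+t\alpha=(I+\alpha)+(g+(t-1)\alpha)$ reaches the identical conclusion (note $g+(t-1)\alpha$ added to $I+\alpha$ is the same multiset as $g+t\alpha$ added to $I$), and the substance is the same: the paper's identity $\mQ_t^\top\mP_h\mQ_t=\mP_{th}$ is exactly your distributivity $t(I+h)=tI+th$ encoded as a matrix relation. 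What your version buys is that it is elementary and self-contained, needing only the definitions of $hI$ and $I+g$; what the paper's version buys is that it exercises the permutation-matrix calculus that is immediately reused in the proof of the invertibility criterion (Theorem~\ref{thm:fixtrans}). One small point worth making explicit in your writeup: the claim that multiplication by $t$ ``leaves the multiplicity untouched'' uses that $i\mapsto ti$ is a bijection of $G$, which holds precisely because $t\in\mathbb{Z}_{\ellstar}^\times$. Your closing observation that finiteness of $I$ is not needed is also accurate.
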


{\blue The proof of the next result is the same as that found in~\cite{decimation} with the exception that the $r$th row of $\T_I$ is identified by a translate of $I$, and each multiset identifying a row of $\T_I\Q_t$ is obtained by multiplying a multiset identifying a row of $\T_I$ by $t$ in $\mathbb{Z}^{\times}_{\ellstar}$. The remaining results in this seciton have identical proofs.}

\begin{thm}\label{thm:fixtrans}
Let $G$ be a finite abelian group with exponent $\ellstar$, $I$ be a finite multiset with elements from $G$, and $\T_I$ be the adjacency matrix from~(\ref{eqn:IncidenceMatrix}).  If $\T_I$ is invertible, then each multiplier of $I$ is translate fixing.  
\end{thm}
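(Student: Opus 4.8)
The plan is to exploit the matrix identity encoding the multiplier condition together with the invertibility of $\T_I$ to manufacture a fixed translate through a trace computation. First I would record that a multiplier $t$ of $I$ with $tI=I+g$ is precisely the statement $\T_{tI}=\T_{I+g}$, i.e. $\mQ_t^\top\T_I\mQ_t=\mP_g\T_I$. Since $\T_I$ is invertible, left-multiplying by $\mQ_t$ (and using $\mQ_t\mQ_t^\top=\I$) gives $\T_I\mQ_t=\mQ_t\mP_g\T_I$, and then right-multiplying by $\T_I^{-1}$ yields
\[
\mQ_t\mP_g=\T_I\,\mQ_t\,\T_I^{-1}.
\]
Thus $\mQ_t\mP_g$ and $\mQ_t$ are similar. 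Both are permutation matrices, so their traces are equal, and for a permutation matrix the trace equals the number of fixed points of the underlying permutation.

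Next I would identify the two permutations concretely, indexing the rows and columns of $\T_I$ by $G$ with standard basis $\{\mathbf{e}_z\}_{z\in G}$. From $\mP_g\mP_h=\mP_{g+h}$ the matrix $\mP_g$ represents the translation $z\mapsto z+g$, and from the listed relation $\mQ_t^\top\mP_h\mQ_t=\mP_{th}$ one checks that $\mQ_t$ represents multiplication $z\mapsto t^{-1}z$ on $G$. Hence $\mQ_t$ fixes the index $z$ iff $t^{-1}z=z$, i.e. iff $(t-1)z=0$; in particular the index $0$ is fixed, so $\tr(\mQ_t)=|\ker(t-1)|\ge 1$. Similarly $\mQ_t\mP_g$ sends $\mathbf{e}_z$ to $\mathbf{e}_{t^{-1}(z+g)}$, so it fixes $z$ iff $t^{-1}(z+g)=z$, i.e. iff $(t-1)z=g$. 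Therefore $\tr(\mQ_t\mP_g)$ equals the number of solutions $z\in G$ of $(t-1)z=g$.

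Combining these, the equation $(t-1)z=g$ has at least $\tr(\mQ_t\mP_g)=\tr(\mQ_t)\ge 1$ solutions, so some $z\in G$ satisfies $(t-1)z=g$. For this $z$ I would then verify directly that $I+(-z)$ is a fixed translate: $t(I-z)=tI-tz=I+g-tz=I-z$, because $g-tz=-z$ is exactly $(t-1)z=g$. This final equality is a direct identity of multisets and requires no appeal to non-periodicity. Since $t$ was an arbitrary multiplier of $I$, every multiplier is translate fixing.

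I expect the main obstacle to be the bookkeeping in the middle step: pinning down exactly which permutations $\mQ_t$ and $\mP_g$ represent, so that $\tr(\mQ_t\mP_g)$ is correctly matched with the solution count of $(t-1)z=g$ rather than some affinely shifted equation, and confirming that the conjugation by $\T_I$ is legitimate — which is precisely where invertibility of $\T_I$ is indispensable (a singular $\T_I$ would break the similarity and the conclusion can genuinely fail, cf. the non-coprime situation). It is worth remarking that invertibility also forces the translates of $I$ to be distinct, since two equal rows would make $\T_I$ singular, so $I$ is automatically non-periodic; however the argument above does not otherwise rely on this.
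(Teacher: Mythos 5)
Your proposal is correct and follows essentially the same route as the paper: the identity $\mQ_t\mP_g\T_I=\T_I\mQ_t$, the similarity $\mQ_t\mP_g=\T_I\mQ_t\T_I^{-1}$ furnished by invertibility, the equality of traces of these permutation matrices, and the observation that $\mQ_t$ fixes the index $0$ so the common trace is at least $1$. The only cosmetic difference is that you extract the fixed point as an explicit solution of $(t-1)z=g$ and verify the multiset identity directly, whereas the paper identifies fixed rows of $\T_I$ with translates of $I$ fixed by $t$; both are the same argument.
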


\begin{thm}\label{thm:TIcyclic}
Let $G$ be a cyclic group with exponent $\ellstar$, $I$ be a finite multiset with elements from $G$, and $\T_I$ be the adjacency matrix from~(\ref{eqn:IncidenceMatrix}). 
Suppose $[c_{g_0},c_{g_1},\ldots,c_{g_{\ell-1}}]$ is the first column of the adjacency matrix $\T_I$.  Then, $\T_I$ is invertible if and only if $\gcd(\sum_{0\le j\le\ell-1}c_{g_j}x^j,x^\ell-1)$ in $\mathbb{Q}[X]$ is constant.
\end{thm}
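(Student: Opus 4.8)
The plan is to recognize $\T_I$ as a \emph{circulant} matrix and diagonalize it with the discrete Fourier basis, which turns the invertibility question into a statement about the values of the first-column polynomial at the $\ell$-th roots of unity. Since $G$ is cyclic its exponent equals its order, so $\ellstar=\ell$ (the degenerate case $r=1$, $\ell_1=\ell$), and I may identify $G$ with $\mathbb{Z}_\ell$ via $g_i=i$. Then
\[
\T_I(i,j)=\m_{I+g_i}(g_j)=\m_I(g_j-g_i)=\m_I\big((j-i)\bmod\ell\big),
\]
so each entry depends only on $(i-j)\bmod\ell$. Writing $c_{g_m}=\m_I(-m)$ for the entries of the first column (indeed $\T_I(i,0)=\m_I(-g_i)=c_{g_i}$), this is the circulant identity $\T_I(i,j)=c_{g_{i-j}}$, indices read modulo $\ell$.

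Next I would produce the eigendata explicitly. Put $\omega=e^{2\pi\mathrm{i}/\ell}$ and, for $0\le k\le\ell-1$, let $v_k=(\omega^{jk})_{0\le j\le\ell-1}^\top$. Substituting $m=(i-j)\bmod\ell$ gives
\[
(\T_I v_k)_i=\sum_{0\le j\le\ell-1}c_{g_{i-j}}\,\omega^{jk}=\sum_{0\le m\le\ell-1}c_{g_m}\,\omega^{(i-m)k}=\omega^{ik}\sum_{0\le m\le\ell-1}c_{g_m}\,\omega^{-mk}=\omega^{ik}\,g(\omega^{-k}),
\]
where $g(x)=\sum_{0\le j\le\ell-1}c_{g_j}x^j$ is exactly the first-column polynomial in the statement. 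Hence each $v_k$ is an eigenvector of $\T_I$ with eigenvalue $g(\omega^{-k})$, and since the $v_k$ form a Vandermonde basis of $\mathbb{C}^\ell$ (the nodes $\omega^k$ being distinct), $\T_I$ is diagonalizable and $\det\T_I=\prod_{0\le k\le\ell-1}g(\omega^{-k})$.

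Finally I would convert this into the gcd condition. The matrix $\T_I$ is invertible iff none of its eigenvalues vanish, i.e.\ iff $g$ does not vanish at any $\omega^{-k}$; as $k$ runs over $0,\dots,\ell-1$ these are precisely all the roots of $x^\ell-1$, so invertibility is equivalent to $g$ and $x^\ell-1$ having no common complex root. This in turn is equivalent to $\gcd(g,x^\ell-1)$ being a nonzero constant in $\mathbb{Q}[X]$: a nonconstant gcd is a positive-degree common factor and therefore supplies a root common to both, whereas a constant gcd produces a B\'ezout identity $1=a(x)g(x)+b(x)(x^\ell-1)$ that forbids one, and the gcd is unchanged up to a unit when passing between $\mathbb{Q}[X]$ and $\mathbb{C}[X]$.

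The only delicate point I anticipate is the bookkeeping of conventions, namely ensuring the eigenvalues attach to the first-\emph{column} polynomial $g$ rather than a first-row polynomial; the index substitution $m=i-j$ settles this, and the symmetry $\{\omega^{-k}:0\le k\le\ell-1\}=\{\omega^{k}:0\le k\le\ell-1\}$ of the root set makes the final condition insensitive to the sign in $\omega^{-k}$. Everything else is the standard circulant/DFT diagonalization together with the elementary equivalence between coprimality over $\mathbb{Q}$ and the absence of common roots over $\mathbb{C}$.
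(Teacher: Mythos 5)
Your proposal is correct and takes essentially the same approach as the paper: both identify $\T_I$ as a circulant matrix whose first column gives the polynomial $P(x)=\sum_j c_{g_j}x^j$ and reduce invertibility to the condition that $\gcd(P(x),x^\ell-1)$ is constant. The only difference is that the paper cites the known rank formula for circulant matrices (rank $=\ell-\deg\gcd(P(x),x^\ell-1)$, Ingleton 1956), whereas you prove the needed fact from scratch by diagonalizing with the DFT eigenbasis; your argument is self-contained but establishes the same underlying result.
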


\begin{comment}
\begin{proof}
 Let $\T_I$ denote the adjacency matrix of~$I$. 
Then $\T_I$ is a circulant matrix, say
\[
\begin{bmatrix}
    c_{g_0} & c_{g_{\ell-1}} & c_{g_{\ell-2}} & \dots  & c_{g_1} \\
    c_{g_1} & c_{g_0} & c_{g_{\ell-1}} & \dots  & c_{g_2} \\
    c_{g_2} & c_{g_1} & c_{g_0} & \dots  & c_{g_3} \\
    \vdots & \vdots & \vdots & \ddots & \vdots \\
    c_{g_{\ell-1}} & c_{g_{\ell-2}} & c_{g_{\ell-3}} & \dots  & c_{g_0}
\end{bmatrix}.
\] 
Let $P(x)=c_{g_0}+c_{g_1}x+c_{g_2}x^2+\cdots+c_{g_{\ell-1}}x^{\ell-1}$. 
 It is known that the rank of a circulant $\ell\times\ell$ matrix equals $\ell-d$ where $d$ equals the degree of $\gcd\left(P(x),x^\ell-1\right)$ in $\mathbb{Q}[X]$~\cite{Ingleton1956}.  Since the circulant matrix $\T_I$ has rank~$\ell$ if and only if $\T_I$ is invertible, the result follows. 
\end{proof}

Now, we get the following corollary by Theorems~\ref{thm:fixtrans} and~\ref{thm:TIcyclic}.
\end{comment}

\begin{crly}
Let $G$ be a cyclic group with exponent $\ellstar$, $I$ be a finite multiset with elements from $G$, and  $[c_{g_0},c_{g_1},\ldots,c_{g_{\ell-1}}]$ be the first column of the adjacency matrix $\T_I$.  
If $\gcd(\sum_{0\le j\le\ell-1}c_jx^j,x^\ell-1)$ in $\mathbb{Q}[X]$ is constant, then each multiplier of $I$ is translate fixing.
\end{crly}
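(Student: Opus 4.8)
The plan is to obtain the statement as an immediate consequence of the two preceding theorems, with no new argument required beyond chaining them. Since $G$ is cyclic and $[c_{g_0},\ldots,c_{g_{\ell-1}}]$ is the first column of $\T_I$, Theorem~\ref{thm:TIcyclic} tells us that $\T_I$ is invertible precisely when $\gcd\bigl(\sum_{0\le j\le\ell-1}c_{g_j}x^j,\,x^\ell-1\bigr)$ is constant in $\mathbb{Q}[X]$. Thus the first step is simply to invoke Theorem~\ref{thm:TIcyclic} in the direction ``constant gcd implies invertible'': the hypothesis of the corollary is exactly this gcd condition, so it yields that $\T_I$ is invertible. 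Here the coefficients written as $c_j$ in the corollary's polynomial are understood to be the entries $c_{g_j}$ of the stated first column, so the two polynomials coincide and no reindexing is needed.

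With invertibility of $\T_I$ in hand, the second step is to apply Theorem~\ref{thm:fixtrans}, which asserts that whenever $\T_I$ is invertible every multiplier of $I$ is translate fixing. Composing the two implications gives the conclusion directly. There is essentially no obstacle here: the substantive content lives entirely in Theorems~\ref{thm:fixtrans} and~\ref{thm:TIcyclic}, and this corollary is merely the bookkeeping combination of them. The only point worth a moment's care is the identification of the polynomial coefficients noted above, after which the deduction is a one-line chain of the two theorems.
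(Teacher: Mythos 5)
Your proposal is correct and matches the paper's own reasoning exactly: the corollary is obtained by chaining Theorem~\ref{thm:TIcyclic} (constant gcd implies $\T_I$ invertible) with Theorem~\ref{thm:fixtrans} (invertibility implies every multiplier is translate fixing), with the coefficient identification $c_j = c_{g_j}$ being the only bookkeeping point. Nothing further is needed.
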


\section{Counting necklaces and bracelets}\label{sec:CountingNecks} 
Let $N(\ell,\ellstar,\delta)$ be the number of necklaces in $\cI$ for a finite abelian group $G$ of order $\ell$ and exponent $\ellstar$.
Then, by Lemma~\ref{lem:necklace}, 
$N(\ell,\ellstar, \delta)=\multiset{\ell}{\delta}/\ell$ whenever $\gcd(\delta,\ellstar)=1$. Thus, if  $\gcd(\ellstar,\delta)=1$, then each necklace  is guaranteed to contain exactly $\ell$ vectors.
Since each necklace contains $|G|$ vectors and each bracelet contains at most two necklaces, each bracelet contains at most $2|G|$ vectors.  A vector $\vv$ is called \textit{symmetric} if there exists some $j_0\in G$ such that $${v_{j_0+k\,}=v_{j_0-k\, } \ \forall \ k\in G}.$$ Such a $j_0$ is called the {\em index of symmetry} of $\vv$. If an index is not an index of symmetry, then it is called an {\em index of non-symmetry}. It is possible for a vector to have more than one index of symmetry. Observe that indices of symmetry of a vector $\vv$ are the only 
indices $i$ such that $v_i=v_{i'}$ does not necessarily hold for some $i' \in G$ with $i' \neq i$.

 If a vector within a necklace is symmetric, then all other vectors in the necklace are also symmetric.  Such a necklace is defined to be {\em symmetric}.  A bracelet contains a single necklace if and only if that necklace is symmetric. Hence we get the following fact.
\begin{fact}\label{fact:symnecklaces}
 The number of density $\delta$  symmetric necklaces of length $\ell$ is  the same as the number of density $\delta$ symmetric bracelets of length $\ell$.
 \end{fact} 

{\blue The following lemma determines the form and the number of 
indices of symmetry for a vector $\vv$ indexed by a finite abelian group $G$. 
The proof from \cite{decimation} holds for such vectors.}

\begin{lma}\label{lem:indsym}
Let $G\cong \ZZZ$ be a finite abelian group of order
$\ell=\ell_1 \cdots\ell_r$ with exponent $\ellstar=\lcm(\ell_1,\ldots,\ell_r)$.
Let $\vv$ be a vector indexed by $G$ and symmetric. Then the number of indices of symmetry for $\vv$ is  $\prod_{i=1}^r\gcd(2,\ell_i)$. Moreover, if $j_1$ is an index of symmetry for $\vv$, then every other index of symmetry $j_2$ has the form
$j_2=j_1+\Delta$, where $\Delta$ is a solution to the equation 
\begin{equation} \label{eqn:Delta}
 2\Delta=0, \quad \Delta \in G.
\end{equation}  
\end{lma}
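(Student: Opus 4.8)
The plan is to translate the symmetry condition into a group equation and count its solutions. First I would fix an index of symmetry $j_1$ of $\vv$, which exists by the hypothesis that $\vv$ is symmetric, so that $v_{j_1+k}=v_{j_1-k}$ for all $k\in G$. Then I would show that an arbitrary index $j_2$ is also an index of symmetry if and only if the \emph{offset} $\Delta=j_2-j_1$ satisfies $2\Delta=0$. The forward direction is the crux: assuming both $v_{j_1+k}=v_{j_1-k}$ and $v_{j_2+k}=v_{j_2-k}$ for all $k$, I would compose the two reflections. Concretely, writing $j_2=j_1+\Delta$ and chaining the two symmetry relations, one finds that the vector is invariant under translation by $2\Delta$, i.e.\ $v_{g+2\Delta}=v_{g}$ for all $g\in G$. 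The reverse direction is a direct substitution: if $2\Delta=0$, then reflecting about $j_1$ and shifting by $\Delta$ realizes the reflection about $j_2$.

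The main obstacle is handling the possible \emph{periodicity} introduced by the translation $2\Delta$. The relation $v_{g+2\Delta}=v_{g}$ does not by itself force $2\Delta=0$ unless $\vv$ is non-periodic, yet the lemma does not assume non-periodicity. I expect this is exactly where the argument from \cite{decimation} must be examined carefully: either the intended claim implicitly restricts to the density regime $\gcd(\delta,\ellstar)=1$ (where Lemma~\ref{lem:necklace} guarantees non-periodicity), or the statement characterizes indices of symmetry \emph{up to} the translation symmetry group of $\vv$. Under the non-periodicity assumption, $v_{g+2\Delta}=v_g$ for all $g$ forces $2\Delta=0$, and the equivalence between ``$j_2$ is an index of symmetry'' and ``$2\Delta$ is a solution to~(\ref{eqn:Delta})'' follows cleanly.

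Once the characterization $j_2=j_1+\Delta$ with $2\Delta=0$ is established, the count is a routine application of Lemma~\ref{lem:frob}. Equation~(\ref{eqn:Delta}) is precisely $m\Delta=a$ with $m=2$ and $a=0$, so a solution always exists (namely $\Delta=0$), and by Lemma~\ref{lem:frob} the number of solutions is $\prod_{1\le i\le r}\gcd(2,\ell_i)$. Since the indices of symmetry are in bijection with these solutions $\Delta$ via $j_2=j_1+\Delta$, the number of indices of symmetry equals $\prod_{i=1}^r\gcd(2,\ell_i)$, as claimed. I would present the solution-counting step by invoking Lemma~\ref{lem:frob} directly rather than recomputing, and reserve the detailed verification for the composition-of-reflections identity, since that is the only place where the structure of $\vv$ (as opposed to pure group theory) genuinely enters.
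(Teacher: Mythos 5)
Your proof is correct, and in its crucial step it takes a genuinely different route from the paper's. The paper's (commented-out) proof writes $j_2=j_1+\Delta$ and argues that $j_2$, being an index of symmetry, must coincide with its own reflection $j_1-\Delta$ about $j_1$ --- invoking the earlier informal observation that indices of symmetry are the only indices whose values need not be matched at a distinct index --- whence $2\Delta=0$; it then counts the solutions of~(\ref{eqn:Delta}) via Lemma~\ref{lem:frob} exactly as you do. You instead compose the two reflections to get invariance of $\vv$ under translation by $2\Delta$ and then appeal to non-periodicity, and your worry at that point is well founded: the lemma as literally stated is false for periodic vectors (for the all-ones vector over $\mathbb{Z}_5$ every index is an index of symmetry, yet $\gcd(2,5)=1$), and the paper's ``observation'' is itself only valid when $\vv$ admits no nontrivial translational symmetry. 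So your route makes explicit a hypothesis the paper leaves implicit; in the intended application (Theorem~\ref{thm:numSymNecks} in the regime $\gcd(\delta,\ellstar)=1$, where Lemma~\ref{lem:necklace} supplies non-periodicity) your argument closes cleanly. You also supply the converse direction --- that every $\Delta$ with $2\Delta=0$ really does produce an index of symmetry $j_1+\Delta$ --- which the paper's proof omits but which is needed to upgrade the upper bound $\prod_{i=1}^r\gcd(2,\ell_i)$ to the exact count claimed. The final counting step via Lemma~\ref{lem:frob} is identical in both arguments.
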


\begin{comment}
\begin{proof}
Let $j_1,j_2$ be   indices of symmetry of $\vv$.
 Let $j_1+\Delta=j_2$. Since the indices of symmetry of a vector $\vv$ are the only 
indices $i$ such that $v_i=v_{i'}$ 
does not necessarily hold for some $i' \in G$ with $i' \neq i$, we must have 
 $$j_1+\Delta=j_1-\Delta=j_2$$  implying
%\begin{equation*} \label{eqn:Delta}
$ 2\Delta=0,$ $ \Delta \in G.$
%\end{equation*}  
Then by Lemma~\ref{lem:frob}, the number of distinct solutions to equation~(\ref{eqn:Delta})
is $\prod_{i=1}^r\gcd(2,\ell_i)$ as $\Delta=0$ is always a solution.
Hence, there are $\prod_{i=1}^r\gcd(2,\ell_i)$ distinct possibilities for
$j_2$ including the case $j_1=j_2$. Moreover, $j_2=j_1+\Delta$, where $\Delta$ is a solution to equation~(\ref{eqn:Delta}).
\end{proof}
\end{comment}

{\blue The following theorem determines the number of binary  symmetric necklaces of length $\ell$ and density $\delta$ when $\gcd(\ellstar,\delta)=1$.  It generalizes the previous result found in~\cite{decimation}. }

\begin{thm}\label{thm:numSymNecks}
Let $G\cong \ZZZ$, $|G|=\ell=\prod_{i=1}^r\ell_i$, $\ellstar=\lcm(\ell_1,\ldots,\ell_r)$ be the exponent of $G$, and $\theta=\prod_{i=1}^r\gcd(2,\ell_i)$.
 Then the number of density $\delta$ symmetric necklaces in $\{\Z_{\ge 0}\}^G$ 
 %such that $\gcd(\delta,\ellstar)=1$ 
 is
\begin{equation}\label{eqn:NumSymNecks}
\sum_{\delta_1+2\delta_2=\delta} \multiset{\frac{\ell-\theta}{2}}{\delta_2} \multiset{\theta}{\delta_1}
\end{equation} 
where the sum is over all $\delta_1,\delta_2\in\Z_{\ge 0}$ such that $\delta_1+2\delta_2=\delta$.
\end{thm}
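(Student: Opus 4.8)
The plan is to count symmetric necklaces through a normalized representative in each one, namely a vector whose index of symmetry is $0$. Such a vector $\vv$ is exactly one fixed by the reversal $d_{-1}$, i.e.\ $v_{-k}=v_k$ for all $k\in G$. First I would note, using the discussion preceding Fact~\ref{fact:symnecklaces}, that a necklace is symmetric precisely when one (equivalently every) of its members is symmetric, and that any symmetric member can be shifted to have index of symmetry $0$: if $v_{j_0+k}=v_{j_0-k}$ for all $k$, then the translate of $\vv$ by $-j_0$ has index of symmetry $0$. Hence every symmetric necklace contains at least one index-$0$-symmetric vector, and conversely every index-$0$-symmetric vector of density $\delta$ sits in a symmetric necklace. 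This reduces the theorem to counting the density $\delta$ vectors $\vv\in\{\Z_{\ge 0}\}^G$ with $v_{-k}=v_k$, together with a count of how many of these share a necklace.

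The counting of index-$0$-symmetric vectors is the part that produces the displayed formula, and I would carry it out through the negation map $\iota\colon g\mapsto -g$ on $G$. A vector is index-$0$-symmetric exactly when it is constant on each orbit of $\iota$. The fixed points of $\iota$ are the solutions of $2\Delta=0$, of which there are $\theta=\prod_{i=1}^r\gcd(2,\ell_i)$ by Lemma~\ref{lem:frob} (applied as in equation~(\ref{eqn:Delta})), while the remaining $\ell-\theta$ elements split into $(\ell-\theta)/2$ two-element orbits $\{g,-g\}$. A value placed on a fixed point contributes once to the density, and a value $b$ placed on a two-element orbit contributes $2b$. Writing $\delta_1$ for the total of the values on the $\theta$ fixed points and $\delta_2$ for the total of the values on the two-element orbits, the density constraint reads $\delta=\delta_1+2\delta_2$. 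For fixed $\delta_1,\delta_2$ the number of assignments is the number of nonnegative integer solutions of a sum of $\theta$ variables equal to $\delta_1$ times that for $(\ell-\theta)/2$ variables equal to $\delta_2$, namely $\multiset{\theta}{\delta_1}\multiset{\frac{\ell-\theta}{2}}{\delta_2}$; summing over $\delta_1+2\delta_2=\delta$ gives~(\ref{eqn:NumSymNecks}).

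The remaining step, and the one I expect to be the main obstacle, is to match this count of index-$0$-symmetric vectors with the number of symmetric necklaces. Here the hypothesis $\gcd(\ellstar,\delta)=1$ is essential: by Lemma~\ref{lem:necklace} each necklace consists of exactly $\ell$ distinct, non-periodic vectors, so distinct translates are distinct vectors. The delicate point is to determine exactly how many index-$0$-symmetric vectors a given symmetric necklace contains. By Lemma~\ref{lem:indsym} the indices of symmetry of a symmetric vector form one coset of the $2$-torsion subgroup $\{\Delta\in G:2\Delta=0\}$, and translating a symmetric vector so as to move each of its indices of symmetry to $0$ produces precisely the index-$0$-symmetric members of its necklace; non-periodicity shows these translates are distinct. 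Establishing that the resulting correspondence between symmetric necklaces and the vectors enumerated in~(\ref{eqn:NumSymNecks}) is exactly as the formula requires---controlling the contribution of the $2$-torsion of $G$ through Lemmas~\ref{lem:necklace} and~\ref{lem:indsym}---is the crux of the argument; it is the only place where the proof departs from a direct transcription of the corresponding result in~\cite{decimation}, and once settled, Fact~\ref{fact:symnecklaces} delivers the matching count of symmetric bracelets at no extra cost.
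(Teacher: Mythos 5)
Your normalization and the orbit count under $g\mapsto -g$ reproduce the paper's computation exactly: the paper likewise fixes the $\theta$ indices of symmetry, chooses a half-system $I^{1/2}_{non}$ of the remaining $\ell-\theta$ indices, and multiplies $\multiset{\theta}{\delta_1}\multiset{\frac{\ell-\theta}{2}}{\delta_2}$ before summing over $\delta_1+2\delta_2=\delta$. So the portion of your argument that produces the displayed formula is sound and is essentially the paper's own.

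The genuine gap is the step you yourself flag as ``the crux'' and then leave unsettled: passing from the count of index-$0$-symmetric vectors to the count of symmetric necklaces. By your own observations this passage is not a bijection. A symmetric, non-periodic vector has its indices of symmetry forming one coset of the $2$-torsion subgroup (Lemma~\ref{lem:indsym}), and shifting each of them to $0$ yields $\theta$ index-$0$-symmetric members of its necklace, distinct because all $\ell$ translates are distinct (Lemma~\ref{lem:necklace}). Hence the quantity you computed equals $\theta$ times the number of symmetric necklaces, not that number itself. When $\theta>1$ this is a real discrepancy rather than a technicality: for $G=\mathbb{Z}_4$ and $\delta=1$ the formula evaluates to $\multiset{1}{0}\multiset{2}{1}=2$, yet the single density-$1$ necklace $\{e_0,e_1,e_2,e_3\}$ is the only symmetric one. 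So you must either divide by $\theta$ or restrict to odd $\ell$, where $\theta=1$ and your argument closes. For what it is worth, the paper's own proof takes the same route and silently identifies ``symmetric vectors whose index-of-symmetry set is the $2$-torsion subgroup'' with ``symmetric necklaces,'' never addressing the $\theta$-to-$1$ multiplicity; your proposal has the merit of making the missing step visible, but it does not supply it, and as stated the step cannot be supplied without altering the formula when $\theta>1$.
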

 
\begin{proof}
First, by Fact~\ref{fact:symnecklaces}, counting symmetric necklaces of length $\ell$ 
is the same as counting symmetric bracelets of length $\ell$.
Let $\vv\in \{\Z_{\ge 0}\}^G$ and have density $\delta$. By Lemma~\ref{lem:indsym}, the number of indices of symmetry for $\vv$ is $\theta=\prod_{i=1}^r\gcd(2,\ell_i)$. 
Let $j_1,\ldots,j_{\theta}$ be all the indices of symmetry of $\vv$.  
Let $I_{non}=G \setminus \{j_1,\ldots j_{\theta}\}$. 
Let $I^{1/2}_{non}\subseteq I_{non}$ be such that $I^{1/2}_{non} \cap -I^{1/2}_{non}=\emptyset$ and $|I^{1/2}_{non}|=(\ell-\theta)/2$. Then the entries of $\vv$ on $I^{1/2}_{non}$ determine the entries of $\vv$ on $-I^{1/2}_{non}$ and $I^{1/2}_{non} \cup -I^{1/2}_{non}=I_{non}$. 
Thus, $\vv$ is completely determined by the $\theta$ indices of symmetry and the indices in $I^{1/2}_{non}$. 

For a vector $\vv$, let $\delta_1$ and $\delta_2$ be the sum of multiplicities of indices of symmetry and non-symmetry in $I^{1/2}_{non}$, respectively.   More precisely, as $\delta = \sum_{i\in G}v_i$,
\begin{equation}\label{deltas}
\delta_1 = \sum_{i=1}^{\theta}v_{j_i} \quad\text{and}\quad \delta_2 = \sum_{i\in I^{1/2}_{non}} v_i. 
\end{equation}
Then the density of $\vv$ is $\delta=\delta_1+2\delta_2$.  
By the comment including (\ref{2.1.1}) and the fact that $|I^{1/2}_{non}|=(\ell-\theta)/2$, there are $\multiset{\theta}{\delta_1} \multiset{\frac{\ell-\theta}{2}}{\delta_2}$
choices of vectors $\vv$ with density $\delta$ such that the equations in (\ref{deltas}) hold.
Hence, as $\delta_1$ and $\delta_2$ vary, the number of density $\delta$ symmetric necklaces in $\{\Z_{\ge 0}\}^G$ %such that $\gcd(\delta,\ellstar)=1$ 
given in equation~(\ref{eqn:NumSymNecks}) holds.
\end{proof}

The next corollary follows immediately from Theorem~\ref{thm:numSymNecks}. 
\begin{crly}
Let $G\cong \mathbb{Z}_{\ell}$. Then the number of  density $\delta$ symmetric necklaces in $\{\Z_{\ge 0}\}^G$ 
%such that $\gcd(\delta,\ellstar)=1$ 
is
\begin{equation*}\label{eqn:NumSymNeckscrly}
\eta=\multiset
{\left\lfloor\frac{\ell-1}{2}\right\rfloor} {\left\lfloor\frac{\delta}{2}\right\rfloor}.
\end{equation*}
\end{crly}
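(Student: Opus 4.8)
The plan is to specialize Theorem~\ref{thm:numSymNecks} to the cyclic case $G\cong\mathbb{Z}_\ell$, where $r=1$ and $\ell_1=\ell=\ellstar$, and then to collapse the resulting one-parameter sum into a single multiset coefficient. For $G\cong\mathbb{Z}_\ell$ we have $\theta=\gcd(2,\ell)$, so that $\tfrac{\ell-\theta}{2}=\lfloor\tfrac{\ell-1}{2}\rfloor$ regardless of the parity of $\ell$: when $\ell$ is odd this equals $\tfrac{\ell-1}{2}$, and when $\ell$ is even it equals $\tfrac{\ell}{2}-1$. With this identification, equation~(\ref{eqn:NumSymNecks}) reads $\eta=\sum_{\delta_1+2\delta_2=\delta}\multiset{\lfloor(\ell-1)/2\rfloor}{\delta_2}\multiset{\theta}{\delta_1}$, so the entire task is to evaluate this sum in closed form and match it against the claimed expression.

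First I would reparametrize by $\delta_2$ alone: since $\delta_1=\delta-2\delta_2$ is forced, the index $\delta_2$ runs over $0\le\delta_2\le\lfloor\delta/2\rfloor$, and the surviving factor is $\multiset{\lfloor(\ell-1)/2\rfloor}{\delta_2}$. I would then separate the two parities of $\ell$. For odd $\ell$ the fixed-point factor $\multiset{\theta}{\delta_1}=\multiset{1}{\delta_1}=1$ disappears, leaving a pure partial sum $\sum_{\delta_2=0}^{\lfloor\delta/2\rfloor}\multiset{(\ell-1)/2}{\delta_2}$. For even $\ell$ the factor is $\multiset{2}{\delta_1}=\delta_1+1=\delta-2\delta_2+1$, so the partial sum now carries a weight that is linear in the summation index.

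The engine for both collapses is the multiset summation (hockey-stick) identity $\sum_{k=0}^{m}\multiset{n}{k}=\multiset{n+1}{m}$, immediate from $\sum_{k=0}^m\binom{n-1+k}{k}=\binom{n+m}{m}$ together with $\multiset{n}{k}=\binom{n+k-1}{k}$; for the weighted even-$\ell$ sum one also needs the companion $\sum_{k=0}^m k\,\multiset{n}{k}=n\,\multiset{n+2}{m-1}$, obtained from $k\,\multiset{n}{k}=n\,\multiset{n+1}{k-1}$ and reindexing. Applying these would fold each partial sum into multiset coefficients, after which the last step is to reconcile the odd and even outputs into the single expression $\multiset{\lfloor(\ell-1)/2\rfloor}{\lfloor\delta/2\rfloor}$, using once more that $\lfloor(\ell-1)/2\rfloor=\tfrac{\ell-\theta}{2}$ unifies the two cases.

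The step I expect to be the main obstacle is exactly this collapse, and the delicate point is the top-argument bookkeeping forced by the fixed-point factor $\multiset{\theta}{\delta_1}$. In the binary setting of~\cite{decimation} the fixed-point contribution is pinned by the parity of $\delta$ to a single admissible value, so no summation occurs and the top argument of the final coefficient is inherited unchanged; here, by contrast, $\delta_1$ genuinely ranges over all values of the correct parity, so the fixed-point factor contributes a full (and, for even $\ell$, weighted) summation. Verifying that these telescoped expressions land on the stated closed form with the correct top argument -- and that the weighted even-$\ell$ sum reduces to the same shape as the unweighted odd-$\ell$ sum -- is the crux of the argument, and is where I would concentrate the careful checking before declaring that the corollary follows from Theorem~\ref{thm:numSymNecks}.
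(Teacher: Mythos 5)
Your plan is the right one --- specialize Theorem~\ref{thm:numSymNecks} to $r=1$ and collapse the sum with the hockey-stick identity --- but you stop exactly at the step you yourself flag as the crux, and if you carry that step out you will find that the sum does \emph{not} land on the stated closed form: the corollary as printed is false. For odd $\ell$ your own reduction gives $\eta=\sum_{\delta_2=0}^{\lfloor\delta/2\rfloor}\multiset{(\ell-1)/2}{\delta_2}$, and the identity $\sum_{k=0}^{m}\multiset{n}{k}=\multiset{n+1}{m}$ yields $\multiset{\frac{\ell+1}{2}}{\lfloor\delta/2\rfloor}$, i.e.\ the top argument is $\lfloor\frac{\ell-1}{2}\rfloor+1$, not $\lfloor\frac{\ell-1}{2}\rfloor$. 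A direct check confirms this: for $\ell=3$, $\delta=2$ there are two symmetric necklaces (represented by $(2,0,0)$ and $(1,1,0)$), and Theorem~\ref{thm:numSymNecks} correctly gives $\multiset{1}{0}\multiset{1}{2}+\multiset{1}{1}\multiset{1}{0}=2$, whereas the claimed $\multiset{1}{1}=1$. The discrepancy with the binary case of~\cite{decimation} is exactly the phenomenon you describe: there the value at the index of symmetry is forced by the parity of $\delta$, so no summation over $\delta_1$ occurs; here $\delta_1$ ranges over all values of the correct parity and the extra summation bumps the top argument by one.

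The even-$\ell$ case fails even more decisively: with $\theta=2$ the weight $\multiset{2}{\delta_1}=\delta_1+1$ produces $(\delta+1)\multiset{\frac{\ell}{2}}{\lfloor\delta/2\rfloor}-(\ell-2)\multiset{\frac{\ell}{2}+1}{\lfloor\delta/2\rfloor-1}$, which is not a single multiset coefficient of the claimed shape (for $\ell=4$, $\delta=2$ the true count is $4$ versus the claimed $\multiset{1}{1}=1$), so no choice of floor functions unifies the two parities into the printed expression. The paper offers no argument here beyond ``follows immediately from Theorem~\ref{thm:numSymNecks},'' so there is nothing to reconcile your computation against; the corollary appears to be a leftover from the binary setting with $\binom{\cdot}{\cdot}$ mechanically replaced by $\multiset{\cdot}{\cdot}$. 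To salvage the statement you should restrict to odd $\ell$ and assert $\eta=\multiset{\frac{\ell+1}{2}}{\left\lfloor\frac{\delta}{2}\right\rfloor}$, which your hockey-stick argument then proves in two lines.
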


\begin{comment}
Hence, the number of length $\ell$ bracelets with density $\delta$ 
% when $\gcd(\ellstar,\delta)=1$  is
\begin{equation*}
\gamma=\frac{
%\left(\begin{matrix}
{\ell \choose
%\\ 
\delta}
%\end{matrix}  \right)
 }{2\ell }+\frac{\eta}{2},
\end{equation*}
where $\eta$ denotes the number of binary symmetric necklaces.
\end{comment}

\section{$H$-orbits}\label{sec:Ringcosets}
For a subgroup $K=\{t_1,\ldots, t_{|K|}\}\leq\mathbb{Z}^\times_{\ellstar}$ and $s\in G$, let $sK=\{st_1,\ldots, st_{|K|}\}$ be the \textit{$K$-orbit of~$s$}. The next theorem follows  from Theorem~\ref{thm:Dplusalpha}.
\begin{thm}\label{crly:union}
Let $G\cong\ZZZ$ have exponent $\ellstar$, $K\leq\mathbb{Z}^\times_{\ellstar}$ be a subgroup of the group of all multipliers of a finite multiset $I$ with elements from $G$, and $\sigma=\sum_{i \in I}i$.   
If $\gcd(|I|,\ellstar)=1$, then
\begin{equation}\label{eqn:union}
I-|I|^{\phi(\ellstar)-1}\sigma=\alpha_1 s_1K\cup \alpha_2 s_2K\cup \cdots \cup \alpha_r s_rK
\end{equation}
for some $s_1, s_2, \ldots, s_r \in G$ and $\alpha_1, \alpha_2, \ldots, \alpha_r \in \Z_{\ge0}$, where the union in equation~(\ref{eqn:union}) is disjoint.
\end{thm}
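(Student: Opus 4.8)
The plan is to use Theorem~\ref{thm:Dplusalpha} to exhibit one explicit translate of $I$ that is fixed by all of $K$, and then to read off the asserted decomposition from the observation that a multiset fixed by $K$ must have a multiplicity function that is constant along the $K$-orbits in $G$. No difference-set or adjacency-matrix machinery is needed; the whole argument reduces to bookkeeping of multiplicities under the multiplication action.

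First I would set $I' = I - |I|^{\phi(\ellstar)-1}\sigma$ and invoke Theorem~\ref{thm:Dplusalpha}. Since $\gcd(|I|,\ellstar)=1$, that theorem (together with Lemma~\ref{lem:necklace}, which guarantees $I$ is non-periodic) shows that $z = -|I|^{\phi(\ellstar)-1}\sigma$ is a solution, independent of the multiplier, for which $I+z$ is fixed by the entire multiplier group $H$ of $I$. Because $K$ is by hypothesis a subgroup of the group of all multipliers of $I$, it follows that $K\le H$, and hence $tI' = I'$ for every $t\in K$.

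Next I would translate the multiset identity $tI' = I'$ into a statement about multiplicities. Unwinding the definition $tI' = \{(ti)^{\m_{I'}(i)} \mid i\in I'\}$ and using that multiplication by $t\in\mathbb{Z}^\times_{\ellstar}$ is a bijection of $G$ (each $\gcd(t,\ell_k)=1$ since $\ell_k\mid\ellstar$), the multiplicity of any $g\in G$ in $tI'$ equals $\m_{I'}(t^{-1}g)$. Thus $tI'=I'$ is equivalent to $\m_{I'}(tg)=\m_{I'}(g)$ for all $g\in G$, and letting $t$ range over $K$ shows that $\m_{I'}$ is invariant under the action of $K$ on $G$ by multiplication; that is, $\m_{I'}$ is constant on each $K$-orbit $sK$.

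Finally I would assemble the decomposition. The $K$-orbits partition $G$, so choosing representatives $s_1,\ldots,s_r$ of the distinct orbits and letting $\alpha_i\in\Z_{\ge 0}$ denote the common value of $\m_{I'}$ on $s_iK$, we obtain $I' = \alpha_1 s_1K\cup\cdots\cup\alpha_r s_rK$, which is exactly equation~(\ref{eqn:union}); the union is disjoint precisely because distinct $K$-orbits are disjoint, and finiteness of $I$ forces all but finitely many $\alpha_i$ to vanish. I do not anticipate a genuine obstacle: the only point requiring care is the multiplicity bookkeeping in the previous paragraph, ensuring that "$tI'=I'$" really forces $\m_{I'}$ to be constant along each orbit (rather than merely permuting elements of equal multiplicity within $I'$), and observing that the disjointness claimed in~(\ref{eqn:union}) is automatic from the orbit partition.
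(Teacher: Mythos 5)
Your proposal is correct and follows essentially the same route as the paper: both invoke Theorem~\ref{thm:Dplusalpha} to get a translate $I'=I-|I|^{\phi(\ellstar)-1}\sigma$ fixed by all of $K$ and then read off equation~(\ref{eqn:union}) as the decomposition of $I'$ into $K$-orbits with multiplicities. Your extra care in showing that $tI'=I'$ forces $\m_{I'}(tg)=\m_{I'}(g)$ for all $g\in G$ (via bijectivity of multiplication by $t$) just makes explicit the multiplicity bookkeeping that the paper's one-line proof leaves implicit.
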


\begin{proof}
By Theorem~\ref{thm:Dplusalpha},
\begin{equation*}\label{eqn:unionAll}
t(I-|I|^{\phi(\ellstar)-1}\sigma)= I-|I|^{\phi(\ellstar)-1}\sigma \quad \text{for all } t \in K,
\end{equation*}
and $K$ acts on the elements in $I-|I|^{\phi(\ellstar)-1}\sigma$. Then equation~(\ref{eqn:union}) is the decomposition of $I-|I|^{\phi(\ellstar)-1}\sigma$ into disjoint union of orbits under the action of $K$, {\blue where each $\alpha_i$ is the multiplicity over $I-|I|^{\phi(\ellstar)-1}\sigma$ of the elements is $s_i K$}.
\end{proof}

The following lemma is an application of the orbit-stabilizer theorem.  Its proof is identical to that found in~\cite{decimation}, which relied only on the group action.

\begin{lma} \label{lma:SetDividesGroup}
If $K\leq\mathbb{Z}^\times_{\ellstar}$ and $s \in G$, then $|s K|$ divides 
$|K|$.
\end{lma}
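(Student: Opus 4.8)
The plan is to recognize $sK$ as a genuine orbit under a group action of $K$ on $G$ and then invoke the orbit-stabilizer theorem. First I would verify that multiplication gives a well-defined action of $K\leq\mathbb{Z}^\times_{\ellstar}$ on the abelian group $G$: for $t\in K$ and $s\in G$, set $t\cdot s=ts$. Since $\ellstar$ is the exponent of $G$, we have $\ellstar s=0$ for every $s\in G$, so the product $ts$ depends only on the residue of $t$ modulo $\ellstar$ and is therefore well-defined. The identity $1\in K$ satisfies $1\cdot s=s$, and for $t_1,t_2\in K$ associativity of scalar multiplication gives $(t_1t_2)\cdot s=t_1(t_2\cdot s)$, so this is indeed a group action.

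Next I would identify the orbit of $s$ under this action with the set $sK=\{st\mid t\in K\}$ introduced immediately before the lemma. With this identification, the orbit-stabilizer theorem yields $|sK|=|K|/|\Stab_K(s)|$, where $\Stab_K(s)=\{t\in K\mid ts=s\}$ is a subgroup of $K$. Since $|\Stab_K(s)|$ divides $|K|$ by Lagrange's theorem, the index $|K|/|\Stab_K(s)|=|sK|$ is an integer dividing $|K|$, which is exactly the assertion.

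I expect no serious obstacle here; the only point requiring a moment's care is the well-definedness of the action, which hinges on $\ellstar$ being the exponent of $G$ so that reducing $t$ modulo $\ellstar$ has no effect when scaling elements of $G$. Everything else is the standard orbit-stabilizer and Lagrange argument, and no finiteness or non-periodicity hypothesis on $I$ is needed, consistent with the remark that the proof relies only on the group action.
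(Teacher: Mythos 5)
Your proof is correct and follows essentially the same route as the paper: both recognize $sK$ as the orbit of $s$ under the multiplication action of $K$ and conclude via the orbit-stabilizer theorem (the paper lets $K$ act on the set $sK$ itself rather than on all of $G$, but this is an immaterial difference). Your added check that the action is well defined because $\ellstar$ is the exponent of $G$ is a careful touch the paper omits.
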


\begin{comment}
\begin{proof}  The group $K$ acts on the elements in $s K$ by multiplication, where for each $t_1,t_2\in K$ and $z \in sK$, $(t_1t_2)z =t_1(t_2z )$. Under this action, for any $st_0 \in sK$, $\Orb(st_0)=sK$.  By the orbit-stabilizer theorem, $|s K|$ divides $|K|$.
\end{proof}

Next, we focus on $\cI$, where $G$ is a finite abelian group with exponent $\ellstar$ such that $\gcd(\delta,\ellstar)=1$.  
\end{comment}

{\blue The following definition relates vectors to multisets and allows the extension of the algorithm from~\cite{decimation} to work for nonnegative integer vectors.}

\begin{dfn}\label{dfn:mul_vect}
Let $G$ be a finite abelian group. An integer $t\in \mathbb{Z}_{\ellstar}^\times$ is called a \textit{multiplier of a vector} 
$\vv \in\{\Z_{\ge 0}\}^G$ if $t$ is a multiplier of the multiset 
$I_{\vv }=\{i^{v_i}\mid i \in G\}$ which contains $i\in G$ $v_i$ times. 
\end{dfn}

 It follows from Definition~\ref{dfn:mul_vect} that $t$ is a multiplier of $\vv$ if and only if $d_{t^{-1}}(\vv)\in U_{\vv }$. 
  Since the set of multipliers of vectors forms a subgroup of $\mathbb{Z}_{\ellstar}^\times$, 
  $t$ is a multiplier of $\vv$ if and only if $t^{-1}=t^{\phi(\ellstar)-1}$ is a multiplier of $\vv$.  Then $t$ is a multiplier of $\vv$ if and only if $d_{t}(\vv)\in U_{\vv}$.

Next, given a potential multiplier group
 $H \le \mathbb{Z}_{\ellstar}^\times$, we 
 determine the number of necklaces $U_{\vv}$ with multiplier group $H$. By  Lemma~\ref{lma:stab2} and Theorem~\ref{crly:union}, finding each necklace $U_{\vv}$ with multiplier group $\Stab (U_{\vv})= H$ is equivalent to finding each collection of $H$-orbits  whose combined size is $|I_{\vv}|$.  

Let $G$ be a finite abelian group, and consider any $\vv\in\{\Z_{\ge 0}\}^G$ with multiplier group $H\leq\mathbb{Z}^\times_{\ellstar}$ such that $\gcd(\delta,\ellstar)=1$ where $\delta := |I_{\vv}|$.  Since $I_{\vv}$ is a multiset with elements from $G$ and $\gcd(|I_{\vv}|,\ellstar)=1$, by Theorem~\ref{thm:Dplusalpha}, 
 there exists a translate of $I_{\vv}$ fixed by~$H$.
Thus, when searching for necklace representatives, it
suffices to search for vectors $\vv$ such that $I_{\vv}$ is
fixed by $H$ by replacing $\vv$ with $\vv'$ 
where $v_g'=\m_{I'}(g)$ and $I'=I_{\vv} - | I_{\vv} |^{\phi(\ellstar)-1}(\sum_{i\in I_{\vv}} i)$.
The finite abelian group $G\cong\ZZZ$ has a decomposition into $H$-orbits; that is, $G=\bigcup_{i=1}^{e}s_iH$ is a disjoint union of $H$-orbits.
Since the $H$-orbits are disjoint, there exists $\{f_1,f_2,\ldots, f_b\}\subseteq \{1,2,\ldots, e\}$ such that $$I_{\vv}=r_1s_{f_1}H\cup r_2s_{f_2}H\cup \cdots \cup r_b s_{f_b}H$$
for some positive integers $b$ and $r_1,r_2,\ldots,r_b\in\Z_{\ge 1}$ by Theorem~\ref{crly:union}.
In general, let 
\[
x_{s_i}=\begin{cases}
 r_i \quad \text{if $v_j=r_i$ for all $j\in s_iH$}\\
 0 \quad \text{otherwise}
 \end{cases}
\]
and $a_i=|s_iH|$ for each $1 \le i \le e$.  Then by Theorem~\ref{crly:union}, the number of solutions to the binary integer linear program for $H$ (ILP$_H$)
\begin{equation}\label{ILP}
\begin{array}{rl}
\min & 0  \\
\mbox{subject to:} &  \sum_{i=1}^ea_ix_{s_i}=\delta,~x_{s_i}\in\Z_{\ge 0}
\end{array}
\end{equation}
is equal to the number of possible $I_{\vv}$ whose multiplier group contains $H$ and satisfies $\delta=|I_{\vv}|$. Here the zero objective function is chosen to cast the problem of finding the number of possible $I_{\vv}$ whose multiplier group contains $H$ and satisfies $\delta=|I_{\vv}|$ as the problem of counting the number of solutions to a ILP.
ILP$_H$~(\ref{ILP}) is a formulation of a sum problem (SP). 
Determining whether ILP$_H$~(\ref{ILP}) is feasible is known to be NP-complete~\cite{Alfonsin1998}.  Finding all solutions of ILP$_H$~(\ref{ILP}) is NP-hard.  The set of solutions to ILP$_H$~(\ref{ILP}) potentially has multiple $j$ such that $I_{c_j(\vv)}$ satisfies 
equation~(\ref{eqn:union}) for some $\vv$.  The repetitions of such solutions are due to the  translates of $I$ fixed by $H$.  By Theorem~\ref{thm:OneFixAll-dan}, the number of such translates  is $\prod_{1\le i\le r} \gcd(C,\ell_i)$, where $C=\gcd(t_1-1,t_2-1,\ldots,t_m-1,\ellstar)$, $H=\langle t_1,t_2,\dots,t_r\rangle$, and $\ellstar$ is the exponent of~$H$.  

Only the number of solutions to ILP$_H$~(\ref{ILP}) is needed  for the purpose of counting decimation classes.  Since $|\mathbb{Z}_{\ellstar}^\times/H|=|\mathbb{Z}_{\ellstar}^\times|/|H|$ and $|s_iH|=|H|$ if 
$s_i\in \mathbb{Z}_{\ellstar}^\times$, for each multiplier group $H$, there exists at least $|\mathbb{Z}_{\ellstar}^\times|/|H|$ $H$-orbits of size $|H|$.  Furthermore, each $H$-orbit has size dividing $|H|$.  Hence, there exists significant duplicity among SP set values, i.e., among the
elements of the sequence  $\{a_i\}_{i=1}^e$. 
 The number of solutions to ILP$_H$~(\ref{ILP}) is computed more efficiently by determining only the number of solutions which are unique up to permutations of the variables of ILP$_H$~(\ref{ILP}) with the same constraint coefficients.  We call this reformulated problem the \textit{unique sum problem} (USP).

In Algorithm~\ref{alg:recursion} (Recursion), $\text{zeros}(n,1)$ is a vector of all zeros of length $n$.
For a vector~$\xx$,  let $\{\xx\}$ be the set of entries of $\xx$ without repetitions.
The vector $\C_{H}$  stores the sizes of all possible $H$-orbits of $s$ for the multiplier group $H$ and $s \in G$ sorted in ascending order. In Method~\ref{meth:USP},  
called {\em unique sum} (US),
 $\Q_H=\text{uniquesort}(\C_{H})$ is a vector whose entries are all the  elements of the set $\{\C_{H}\}$ sorted in ascending order, i.e.,
  $\Q_H$ is obtained by removing the repeat values in the vector $\C_{H}$.    
 The length of a vector $\xx$ is $\text{length}(\xx)$ and 
 $\sum_{i}[f_H(i)=q_H(j)]$ is the number of entries of $\C_H$ that are equal to $q_H(j)$.
 For any ordered solution $\PP$ to the US we must have $\{\PP\}\subseteq \{\C_H\}$ and satisfy the additional constraint, $p_{s_i}\leq p_{s_{i+1}}$ for each $p_{s_i}\in \{\PP\}$. Each $p_{s_i}\in \{\PP\}$ corresponds to some $a_jx_{s_j}$ in 
ILP$_H$~(\ref{ILP}) such that $x_{s_j}=r_j$.  Such $H$-orbit combinations are  obtained in Algorithm~\ref{alg:recursion} via recursion using the vector $\Q_H$ and  their corresponding duplicity in $\{\C_H\}$, denoted by $\R_H$. 
 At the $k$'th  stage of the recursion, the desired sum is $\mu$ and the index of $\Q_H$ being considered for addition is $k$.

\begin{algorithm}[H]
\caption{Counting all solutions to the SP by counting that of USP (Recursion)}\label{alg:recursion}
\begin{algorithmic}[1]
\Procedure{Recursion}{$\Q_H$,$\R_H$,$\mu$, $k$}
\State $\sols := \text{zeros}(r_H(k),1$);
%\For{$j:= 0$ {\bf to} $m$ {\bf step} $1$}
\For {$j_k:=0$ {\bf to} $\lfloor \frac{\mu}{q_H(k)} \rfloor$ {\bf step} $1$}
\State $\nu := \mu-j_k*q_H(k)$;
\If{$\nu=0$}
\State $sols(j_k):=\multiset{r_H(k)}{j_k}$;
\State break;
\EndIf
\If{$\nu<0$} 
\State break;
\EndIf
\State $sols(j_k) := \text{Recursion}(\Q_H,\R_H,\nu,k+1)$;
\State $sols(j_k) := sols(j_k)*\multiset{r_H(k)}{j_k}$;

\EndFor

\Return $\sum_{j_k}sols(j_k)$;
\EndProcedure
\end{algorithmic}
\end{algorithm}

\newfloat{method}{htbp}{loa}
\floatname{method}{Method}

\begin{method}[H]%[t]
\caption{Unique sum (US)\label{meth:USP}}
\begin{algorithmic}[1]
\Procedure{US}{$\C_H$, $\delta$}
\State $\Q_H := \text{uniquesort}(\C_H)$;
\State $m:=\text{length}(\Q_H)$;
\State $\R_H:=\text{zeros}(m,1)$;
\For{$j:= 0$ {\bf to} $m-1$ {\bf step} $1$}
\State $r_H(j):=\sum_{i}[f_H(i)=q_H(j)]$;

\EndFor

\Return $z:=\, $Recursion($\Q_H,\R_H,\delta,0$);
\EndProcedure
\end{algorithmic}
\end{method}
At  recursion $k$ in Algorithm~\ref{alg:recursion}, the number of solutions is multiplied by $\multiset{r_H(k)}{j_k}$.  Implementing this operation at each $k$ inflates the number of unique solutions to the number of ILP$_H$~(\ref{ILP}) solutions.  This is the number of ways of choosing the number of duplicates of each element appearing  within the US solution from the number of duplicates available. Hence, Method~\ref{meth:USP} is designed to compute
\[
\sum_{\substack{0\le i \le m-1 \\[1pt] 0\le j_i\le \lfloor \frac{\delta}{q_H(i)} \rfloor \\ j_0q_{H}(0)+\cdots+j_{m-1}q_{H}(m-1)=\delta}}\prod_{i=0}^{m-1} \multiset{r_H(i)}{j_i}.
\]

\section{A method for determining the number of decimation classes} \label{sec:CountExam}

{\blue The following lemma is used to develop a method for finding the number of decimation classes.  Its proof is identical to that found in~\cite{decimation}.}

\begin{lma}\label{lma:HuHv}
Let $G\cong\ZZZ$ have exponent $\ellstar$ and  $\uuu,\vv\in \Z_{\ge 0}^{G}$ be two vectors in the same decimation class. Let $H_{\uuu}, H_{\vv} \leq \mathbb{Z}^\times_{\ellstar}$ 
be the multiplier groups of $\uuu$ and $\vv$, respectively.
Then $H_{\uuu}=H_{\vv}$.
\end{lma}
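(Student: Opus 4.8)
The plan is to recast each multiplier group as the stabilizer of a necklace under the action of $\mathbb{Z}_{\ellstar}^\times$, and then to exploit the fact that $\mathbb{Z}_{\ellstar}^\times$ is abelian so that stabilizers of points lying in a common orbit must coincide. The whole content of the lemma is this abelian observation; the rest is bookkeeping that translates the decimation-class relation on vectors into an orbit relation on necklaces.

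First I would unwind the definitions. By Definition~\ref{dfn:mul_vect}, the multiplier group $H_{\vv}$ of the vector $\vv$ is the multiplier group of the associated multiset $I_{\vv}$, and likewise $H_{\uuu}$ is the multiplier group of $I_{\uuu}$. By Lemma~\ref{lma:stab2} the multiplier group of a multiset $I$ equals $\Stab(U_I)$, the stabilizer of its necklace under the action $jU_I=U_{jI}$ of $\mathbb{Z}_{\ellstar}^\times$. Hence it suffices to prove $\Stab(U_{I_{\uuu}})=\Stab(U_{I_{\vv}})$.

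Next I would use the hypothesis that $\uuu$ and $\vv$ lie in the same decimation class. Under the correspondence $\vv\leftrightarrow I_{\vv}$, this means $I_{\uuu}$ and $I_{\vv}$ lie in the same orbit of $G\rtimes\mathbb{Z}_{\ellstar}^\times$, so $I_{\vv}=hI_{\uuu}+g$ for some $(g,h)\in G\rtimes\mathbb{Z}_{\ellstar}^\times$. Passing to necklaces and using that translation by $g$ does not change the necklace together with $U_{hI}=hU_I$ from Lemma~\ref{lma:stab2}, I obtain $U_{I_{\vv}}=U_{hI_{\uuu}+g}=U_{hI_{\uuu}}=h\,U_{I_{\uuu}}$. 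Thus $U_{I_{\uuu}}$ and $U_{I_{\vv}}$ lie in a single orbit of the $\mathbb{Z}_{\ellstar}^\times$-action, with $h\in\mathbb{Z}_{\ellstar}^\times$ carrying one to the other.

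Finally I would invoke the standard fact that for any group action the stabilizers along an orbit are conjugate, namely $\Stab(h\cdot x)=h\,\Stab(x)\,h^{-1}$; here this reads $\Stab(U_{I_{\vv}})=h\,\Stab(U_{I_{\uuu}})\,h^{-1}$. The decisive point, and the only one with any content, is that $\mathbb{Z}_{\ellstar}^\times$ is abelian, so this conjugation is trivial and $h\,\Stab(U_{I_{\uuu}})\,h^{-1}=\Stab(U_{I_{\uuu}})$. Therefore $\Stab(U_{I_{\vv}})=\Stab(U_{I_{\uuu}})$, i.e.\ $H_{\uuu}=H_{\vv}$. I do not anticipate any genuine obstacle: the only care required is in correctly matching the decimation-class relation on vectors with the orbit relation on necklaces so that the abelian stabilizer argument applies cleanly.
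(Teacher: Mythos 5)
Your proof is correct, but it takes a genuinely different route from the paper's. The paper argues directly at the level of multisets: writing $I_{\vv}=aI_{\uuu}+b$, it takes $t\in H_{\vv}$ with $tI_{\vv}=I_{\vv}+g$, expands both sides, and solves explicitly to get $tI_{\uuu}=I_{\uuu}+a^{\phi(\ellstar)-1}(b+g-tb)$, which exhibits $t$ as a multiplier of $I_{\uuu}$ with a concrete shift; the reverse inclusion follows by symmetry. You instead recast everything through Lemma~\ref{lma:stab2}: the multiplier groups are the stabilizers of the necklaces $U_{I_{\uuu}}$ and $U_{I_{\vv}}$ under the $\mathbb{Z}_{\ellstar}^\times$-action, the decimation-class hypothesis puts these two necklaces in one orbit via $U_{I_{\vv}}=hU_{I_{\uuu}}$, and the conjugate-stabilizer formula $\Stab(h\cdot x)=h\,\Stab(x)\,h^{-1}$ collapses to equality because $\mathbb{Z}_{\ellstar}^\times$ is abelian. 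Both arguments ultimately rest on the commutativity of $\mathbb{Z}_{\ellstar}^\times$ (the paper uses it silently when it rewrites $t(aI_{\uuu})$ as $a(tI_{\uuu})$), but your version isolates that as the single conceptual point and reuses machinery the paper has already built, at the cost of not producing the explicit translate $g'$ with $tI_{\uuu}=I_{\uuu}+g'$, which the paper's computation gives for free. Each step of your argument checks out: translation does not change a necklace, $jU_I=U_{jI}$ is exactly the action of Lemma~\ref{lma:stab2}, and Definition~\ref{dfn:mul_vect} identifies $H_{\vv}$ with the multiplier group of $I_{\vv}$, so no gap remains.
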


\begin{comment}
\begin{proof}
Let $I_{\uuu}$ and $I_{\vv}$ be defined as in Definition~\ref{dfn:mul_vect}. Since $\uuu$ and $\vv$ are in the same decimation class, there exists constants $a \in \mathbb{Z}^\times_{\ellstar}$ and
 $b \in G$ such that $I_{\vv}=aI_{\uuu}+b$. Let $t \in H_{\vv}$.
Then $tI_{\vv}= I_{\vv}+g$ for some $g \in G$.
This implies that $I_{\vv}+g=atI_{\uuu}+tb$. Hence, 
$aI_{\uuu}+b+g=atI_{\uuu}+tb$. Then 
$$tI_{\uuu}=a^{\phi(\ellstar)-1}(aI_{\uuu}+b+g-tb)=I_{\uuu}+a^{\phi(\ellstar)-1}(b+g-tb).$$
So $H_{\vv}\leq H_{\uuu}$. Since $I_{\uuu}=a^{\phi(\ellstar)-1}I_{\vv}-a^{\phi(\ellstar)-1}b$, the same argument proves $H_{\uuu}\leq H_{\vv}$.
\end{proof}
\end{comment}

{\blue The method for finding the number of decimation classes from \cite{decimation} now carries over for nonnegative integer vectors.  This method requires generating the lattice of all subgroups of a finite abelian group.  The following lemma was proven in \cite{decimation} and aids in generating the lattice of all subgroups of an abelian group.}

\begin{lma}\label{lma:SNF} 
Let $K=\langle t_1,\dots,t_k\rangle$ be a finite abelian group, where $k$ is the smallest integer such that 
{$K=\langle x_1,\dots,x_k\rangle$ for some $x_1,\ldots, x_k \in G$.} Then each subgroup $J$ of $K$ can be generated by no more than $k$ elements. 
 \end{lma}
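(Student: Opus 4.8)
The plan is to realize $K$ as a quotient of a free abelian group and to pull the subgroup $J$ back into that free group, where a bound on the number of generators is transparent. Since $K=\langle x_1,\dots,x_k\rangle$, sending the standard basis vectors $e_1,\dots,e_k$ of $\mathbb{Z}^k$ to $x_1,\dots,x_k$ extends to a surjective homomorphism $\pi:\mathbb{Z}^k\to K$. First I would set $M=\pi^{-1}(J)$, a subgroup of $\mathbb{Z}^k$ containing $\ker\pi$, and observe that $\pi$ restricts to a surjection $M\to J$; hence the image of any generating set of $M$ generates $J$.

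The key step is the classical structure result that a subgroup of a free abelian group of finite rank $k$ is itself free of rank at most $k$. Granting this, $M$ is free abelian of rank $s\le k$ and so is generated by $s$ elements $m_1,\dots,m_s$; their images $\pi(m_1),\dots,\pi(m_s)$ generate $J=\pi(M)$, so $J$ is generated by at most $k$ elements, as claimed. I would also remark that minimality of $k$ is never used---only that $K$ admits some $k$-element generating set---so the statement holds a fortiori for the smallest such $k$.

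I expect this structure result on subgroups of $\mathbb{Z}^k$ to be the sole nontrivial ingredient, with everything else formal; this is precisely where Smith Normal Form enters, explaining the label of Lemma~\ref{lma:SNF}. Concretely, because $K$ is finite, $\ker\pi$ already has full rank $k$ in $\mathbb{Z}^k$, so $M\supseteq\ker\pi$ has rank $k$; choosing any finite generating set of $M$ and assembling its coordinate vectors as the columns of an integer matrix $B$, one reduces $B$ to $UBV=\diag(d_1,\dots,d_k,0,\dots,0)$ with unimodular $U,V$, and $M$ thereby acquires a basis of $k$ elements, whence $J=\pi(M)$ needs at most $k$ generators. As a self-contained cross-check I would note the alternative via $p$-ranks: for any finite abelian group $A$ the minimal number of generators equals $\max_p\dim_{\mathbb{F}_p}A[p]$, where $A[p]$ is the $p$-torsion subgroup; since $J[p]$ is an $\mathbb{F}_p$-subspace of $K[p]$ for every prime $p$, we get $\dim_{\mathbb{F}_p}J[p]\le\dim_{\mathbb{F}_p}K[p]$, and taking the maximum over $p$ yields that $J$ needs at most $k$ generators as well.
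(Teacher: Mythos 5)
Your argument is correct, and it is a genuinely valid alternative organization of what is, at bottom, the same linear-algebraic fact the paper uses. The paper's proof never lifts to a free abelian group: it starts from an arbitrary finite generating set $y_1,\ldots,y_m$ of $J$, records the relations $\A\ttt=\y$ expressing these generators in terms of $t_1,\ldots,t_k$, and applies Smith Normal Form directly to the $m\times k$ relation matrix $\A=\mP\D\mQ$; the unimodularity of $\mP$ and $\mQ$ lets it replace the generating sets by $\uuu=\mQ\ttt$ and $\vv=\mP^{-1}\y$, and the diagonal equation $\D\uuu=\vv$ then shows $v_{r+1}=\cdots=v_m=0$, so $J=\langle v_1,\ldots,v_r\rangle$ with $r\le\min\{k,m\}$. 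You instead pull $J$ back along the surjection $\pi:\mathbb{Z}^k\to K$ and invoke the structure theorem for subgroups of free abelian groups of finite rank; since that theorem is itself standardly proved by Smith Normal Form, the two routes share their key ingredient but package it differently. Your packaging buys a cleaner statement (no need to fix a generating set of $J$ in advance, and the observation that minimality of $k$ is irrelevant is correct and applies equally to the paper's proof), while the paper's direct matrix computation is self-contained and exhibits the small generating set of $J$ explicitly as $v_1,\ldots,v_r$. Your $p$-rank cross-check via $\dim_{\mathbb{F}_p}J[p]\le\dim_{\mathbb{F}_p}K[p]$ is a genuinely different, SNF-free argument and is also correct for finite abelian groups; it does not appear in the paper.
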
 
 
 \begin{comment}
 \begin{proof}
 Let $\langle y_1,\ldots,y_m\rangle=J\leq K$ and both $K$ and $J$ be written additively. Then there is an $m\times k$ integer matrix $\A$  such that $\A\ttt=\y$, where 
 $\ttt=(t_1,\ldots,t_k)^\top$ and $\y=(y_1,\ldots,y_m)^\top$.  
Moreover, there exists $m \times m$, $k\times k$ and $m \times k$ integer matrices $\mP$, $\mQ$, $\D$ such that $\A=\mP\D\mQ$, 
$$\D= \begin{bmatrix}
d_1 & 0 & 0& & \dots & &0 \\
0 & d_2 & 0& &\dots  & &0\\
0 & 0 & \ddots& &\dots & &0\\
 \vdots &  & & d_r& & &\vdots  \\
   &  & & & 0& & \\
    &  & & & &\ddots & \\  
 0 & 0& 0&  & \dots & & 0\end{bmatrix}
,$$ where $d_i\, |\, d_{i+1}$ for all $i \in \{1,\ldots,r-1\}$, $r \leq \min\{k,m\}$,  
and $\mP$ and $\mQ$ both have integer matrix inverses~\cite{Larsen1974}. Then $\mP\D\mQ\ttt=\y$ implies $\D\mQ\ttt=\mP^{-1}\y$. Set $\uuu=\mQ\ttt$ and $\vv=\mP^{-1}\y$, then $\D\uuu=\vv$,  
$K=\langle u_1,\dots,u_k\rangle$, and
$J=\langle v_1,\dots,v_m\rangle$. Now, the equation $\D\uuu=\vv$  implies that $J=\langle v_1,\dots,v_r\rangle$. 
\end{proof} 
\end{comment}

 For any two subgroups, $K<L$, $K$ is called a \textit{maximal subgroup} of $L$ if $K <L$ and there is no subgroup $J$  such that $K<J<L$.
\textit{Cyclic extension} is a method for finding  all  subgroups of a finite abelian group $L=\langle u_1,\dots,u_k\rangle$, and constructing the lattice of all subgroups $\mathbb{L}$.  The lattice $\mathbb{L}$ is presented as a graph, where each distinct subgroup labels a distinct vertex, and  an edge from $J$ to $K$ exists if and only if $J$ is a maximal subgroup of $K$.  Cyclic extension first generates all of $L$'s cyclic subgroups 
%by generating all of its  %maximal cyclic subgroups 
$\langle v_i\rangle$
for each $1\le i\le m.$  It then recursively combines them to generate all subgroups of $L$. 
Let $\alpha_1$ be the number of subgroups of $\langle v_1\rangle$, and 
for each $2\le j\le m$ 
let $\alpha_j$ be the number of subgroups of $\langle v_j\rangle$ each of which is not a subgroup of any of the subgroups $\langle v_1\rangle, \langle v_2\rangle,\ldots, \langle v_{j-1}\rangle$. 
 By Lemma~\ref{lma:SNF}, it suffices to combine cyclic subgroups of at most $k$ cyclic
 % maximal 
 subgroups of $L$. Cyclic extension computes 
 $$\sum_{1\le i_1 \le m} \alpha_{i_1} + \sum_{1\le i_1<i_2 \le m}
 \alpha_{i_1}\alpha_{i_2} + \cdots + \sum_{1\le i_1<\cdots<i_k \le m}
 \alpha_{i_1}\cdots\alpha_{i_k}$$ number of all such subgroups to construct  $\mathbb{L}$.
 Cyclic extension uses a breadth-first search to minimize the number of groups generated. 
\textit{Lattice insertion} is a method used within cyclic extension for iteratively constructing $\mathbb{L}$.  Let $\mathbb{L}_i$ be the interim lattice containing only $i$ subgroups of $L$.  
The next group generated, $L_{i+1}$, is compared against each subgroup $K$  in $\mathbb{L}_i$ with $|K|$ dividing  $|L_{i+1}|$ and $J$ in $\mathbb{L}_i$ such that $|J|$ is divisible by $|L_{i+1}|$.  
If $K$ is a $\mathbb{L}_i$-maximal subgroup of $L_{i+1}$,
 i.e., there is no subgroup $K'$  in $\mathbb{L}_i$ 
such that $K<K'<L_{i+1}$,  
then an edge is added from $L_{i+1}$ to $K$.  Moreover, each edge from 
$J$ to $K$ in $\mathbb{L}_i$ such that $K<L_{i+1}<J$ is replaced with an edge from $J$ to $L_{i+1}$.    
  Each interim lattice $\mathbb{L}_i$ is guaranteed to be connected in this method as $\mathbb{L}_0$ contains only the trivial subgroup, $\langle 1 \rangle$, and at each step of the method, each newly introduced subgroup $L_{i+1}$ is either equal to a subgroup in $\mathbb{L}_i$ and discarded or one of the subgroups in $\mathbb{L}_i$ is an  $\mathbb{L}_i$-maximal subgroup of $L_{i+1}$. 
  
Method~\ref{meth:basic}
 (Count) determines the number of decimation classes of density $\delta$   nonnegative integer vectors indexed by a finite abelian group, of odd order $\ell$ and exponent $\ellstar$, such that $\gcd(\ellstar,\delta)=1$. 
\begin{method}[t]
\caption{A method for counting decimation classes of density $\delta$  nonnegative integer vectors indexed by a finite abelian group $G \cong \ZZZ$ such that $G$ is of odd order $\ell$, has exponent $\ellstar$, with $\gcd(\ellstar,\delta)=1$\label{meth:basic}}%\label{alg:count}
\begin{algorithmic}[1]
\Procedure{Count}{$\ell,\ellstar,\delta$}
\State {\bf Find} the set of all subgroups 
$\mathcal{H}$ of $\mathbb{Z}^\times_{\ellstar}$ along with $g_1,\ldots, g_{\gamma(H)}$ such that

\hspace{-.3cm}
 $\langle g_1,\ldots, g_{\gamma(H)}\rangle=H$, where $\gamma(H)$ is preferably small for each $H \in \mathcal{H}$;\label{step:start}
\State {\bf Construct} $\mathbb{L}$, the  lattice of subgroups for $\mathbb{Z}_{\ellstar}^\times$;\label{step:construct}
\State {\bf Set}  
$N'_{\langle1\rangle}=\frac{\multiset{\ell}{\delta}}{\ell}$\label{step:initial};
\For {$\langle g_1,\ldots, g_{\gamma(H)}\rangle=H\in \mathcal{H}\setminus \{\langle 1\rangle\}$}
\State {\bf Generate} all $H$-orbits;
\State {\bf Apply}  Method~\ref{meth:USP} to find the number of solutions $nsol$ to ILP$_H$~(\ref{ILP});\label{step:ILP}
\State {\bf Set} $N'_H:=\frac{nsol}{\prod_{1\le i\le r} \gcd(C,\ell_i)}$, where $C=\gcd(g_1-1,g_2-1,\ldots,g_{\gamma(H)}-1,\ellstar)$ to be 

\hspace{.3cm} the number of necklaces with  multiplier  groups  containing $H$; \label{step:multiplier}  % 
 \State {\bf Discount} $N'_H$, by  the number of necklaces  whose multiplier groups strictly contain
 
 \hspace{.3cm} $H$ from the top down in $\mathbb{L}$; \label{step:discount2}
  \State {\bf Set} the resulting value from Step~\ref{step:discount2} to be $N_H$, i.e., the number of necklaces whose 
  
 \hspace{.3cm}  multiplier groups are equal to $H$;

\State {\bf Set} $numD_H:=\frac{(N_H)*|H|}{\phi(\ellstar)}$;\label{step:laststatement}
\EndFor
 \State {\bf Set} $N_{\langle1\rangle}=N'_{\langle1\rangle}-\sum_{H\in \mathcal{H}\setminus \{\langle 1\rangle \}} N_H$;\label{step:N1}
  \State {\bf Set} $numD_{\langle 1\rangle} =
  \frac{N_{\langle1\rangle}}{\phi(\ellstar)}$;\label{step:numD1}\\
\Return $\sum_{H\in \mathcal{H}} numD_H$;\label{step:sum}
\EndProcedure
\end{algorithmic}
\end{method}
 In Step 2, 
 we used cyclic extension for generating the lattice of all subgroups $\mathbb{L}$ of $\mathbb{Z}^\times_{\ellstar}$ for $\ell=\ellstar$ and $3 \le \ellstar \le 121$.  While generating $\mathbb{L}$, all  subgroup members are stored to improve efficiency.  For example, if the generators of subgroup $K$ exist within subgroup $H$ and $|K|<|H|$, then $K<H$.  This also improved efficiency of subsequently generating the $H$-orbits by multiplying elements of $G=\mathbb{Z}_{\ell}$.  The sizes and duplicities of these orbits are recorded in $\R_H$ and $\Q_H$, respectively, to solve the corresponding US.
Step~\ref{step:initial} 
computes the number of
necklaces containing $\langle 1\rangle$ based on Lemma~\ref{lem:necklace}.
Step~\ref{step:multiplier} 
is necessary by Theorem~\ref{thm:OneFixAll-dan} for obtaining the number of necklaces containing $H$ from the number of solutions to ILP$_H$~(\ref{ILP}). 
Let $\mathcal{H}$ be the set of all subgroups of $H$. In Step~\ref{step:discount2}, our method of counting necklaces with a given multiplier group, $H \in \mathcal{H} \setminus \{ \langle 1 \rangle\}$, discounts the number of necklaces whose multiplier groups strictly contain  $H$ from that of necklaces whose multiplier groups contain $H$.  The resulting count is the number of necklaces with multiplier group $H$.  
Steps~\ref{step:laststatement}~and~\ref{step:numD1} reduce necklace counts from Steps~\ref{step:discount2}~and~\ref{step:N1} to decimation counts by applying the last statement of Lemmas~\ref{lma:stab2} and~\ref{lma:HuHv}. Step~\ref{step:sum} sums counts across all multiplier groups to get  the total number of decimation classes.


\begin{thebibliography}{99}
\bibitem{Alfonsin1998}
J.L.R. Alfonsın. On variations of the subset sum problem. Discrete Appl. Math., 81(1-3):1–7, 1998.

\bibitem{Dokovic2015}
D. Djokovic, I. Kotsireas, D. Recoskie, and J. Sawada. Charm bracelets and their application to the
construction of periodic Golay pairs. Discrete Applied Mathematics, 188(1):32–40, 2015.

\bibitem{Fletcher2001} 
R.J. Fletcher, M. Gysin, and J. Seberry. Application of the discrete Fourier transform to the search for
generalised Legendre pairs and Hadamard matrices. Australas. J. Combin., 23:75–86, 2001.

\bibitem{RotmanBook}
J.J. Rotman. An Introduction to the Theory of Groups. Springer-Verlag, New York, NY, USA, 4th
edition, 1994.

\bibitem{Sawada2013}
J. Sawada and A. Williams. A Gray code for fixed-density necklaces and Lyndon words in constant
amortized time. Theoret. Comput. Sci., 502:46–54, 2013.

\bibitem{decimation}
J.S. Turner, D. A. Bulutoglu, I. Kotsireas, D. Baczkowski, and A.J. Geyer  Counting the decimation
classes of binary vectors with relatively prime length and density. J. Algebr. Comb., 55:61–87, 2022.

\end{thebibliography}
\end{document}